\documentclass{amsart}
\usepackage{latexsym,amsmath,amssymb}
\newtheorem{thm}{Theorem}[section]
\newtheorem{cor}[thm]{Corollary}
\newtheorem{exam}[thm]{Example}
\newtheorem{lem}[thm]{Lemma}

\theoremstyle{definition}
\theoremstyle{remark}
\newtheorem{rem}[thm]{Remark}
\numberwithin{equation}{section}

\begin{document}

\title[]
{Composition operators on some semi-Hilbert spaces}

\author{\sc\bf Y. Estaremi and M. S. Al Ghafri}
\address{\sc Y. Estaremi}
\email{y.estaremi@gu.ac.ir}
\address{Department of Mathematics, Faculty of Sciences, Golestan University, Gorgan, Iran.}
\address{\sc M. S. Al Ghafri}
\address{Department of Mathematics, University of Technology and Applied Sciences, Rustaq,   Sultanate of Oman.}
\email{mohammed.alghafri@utas.edu.om}
\address{}
\thanks{}

\thanks{}

\subjclass[2020]{47B33}

\keywords{Composition operator, Semi-Hilbert space, Multiplication operator.}

\date{}

\dedicatory{}

\commby{}

\begin{abstract}
This paper investigates composition operators and weighted composition operators on semi-Hilbert spaces induced by positive multiplication operators on \( L^2(\mu) \). Within the framework of \( A \)-adjoint operators, we characterize conditions under which a composition operator \( C_\varphi \) is \( A \)-selfadjoint, \( A \)-normal, \( A \)-quasinormal, or \( A \)-unitary, where \( A = M_u \) denotes a positive multiplication operator. Additionally, we provide necessary and sufficient conditions for \( C_\varphi \) to be an \( A \)-isometry or an \( A \)-partial isometry. Analogous results are also obtained when \( A \) is a positive composition operator. Several examples are presented to illustrate the applicability of the main results.
\end{abstract}

\maketitle
\section{Introduction and Preliminaries}

The study of composition operators \( C_{\varphi} \) and multiplication operators  \( M_u \) in function spaces, has led to significant advances in characterizing their adjoints, isometric properties, and spectral behavior \cite{Hedenmalm2000,DurenSchuster2004,Zhu2005,tak}. In particular, the self-adjointness of weighted composition operators \( W = M_u C_{\varphi} \) in \( L^2(\mu) \) and their unitary equivalences have been explored in recent literature \cite{Cowen1995}.
A fundamental aspect of operator theory is the study of adjoint operators. The concept of \( A \)-adjoint operators, which generalizes classical adjoints, is crucial in defining an alternative inner product structure. Let \( \mathcal{H} \) be a complex Hilbert space with inner product $\langle \cdot , \cdot \rangle$ and norm $\|\cdot\|$. Let $\mathcal{B}(\mathcal{H})$ denotes   the Banach algebra of all bounded linear operators on $\mathcal{H}$. $\mathcal{B}(\mathcal{H})^{+}$ is the cone of positive (semi-definite) operators, i.e.,

\[
\mathcal{B}(\mathcal{H})^{+} = \{A \in \mathcal{B}(\mathcal{H}) : \langle A\xi , \xi \rangle \geq 0, \, \forall \xi \in \mathcal{H} \}.
\]

For any operator $T$ in $\mathcal{B}(\mathcal{H})$, its range and null space are denoted by $\mathcal{R}(T)$ and $\mathcal{N}(T)$, respectively, while its adjoint is represented as $T^*$. If $\mathcal{M}$ is a closed subspace of $\mathcal{H}$, the orthogonal projection onto $\mathcal{M}$ is written as $P_{\mathcal{M}}$. A subspace $\mathcal{M}$ is said to be invariant under $T$ if $T\mathcal{M} \subset \mathcal{M}$. Moreover, $\mathcal{M}$ is called a reducing subspace for $T$ when both $\mathcal{M}$ and its orthogonal complement $\mathcal{M}^{\perp}$ remain invariant under $T$. The set of complex numbers is denoted by $\mathbb{C}$, and the complex conjugate of a number $\lambda$ is expressed as $\overline{\lambda}$. Additionally, the closure of $\mathcal{R}(T)$ is represented as $\overline{\mathcal{R}(T)}$.

Let \( A \in \mathcal{B}(\mathcal{H})^{+} \) be a positive operator. The semi-inner product induced by \( A \) is given by
\[
\langle \xi, \eta \rangle_A = \langle A\xi, \eta \rangle, \quad \forall \xi, \eta \in \mathcal{H}.
\]
Then $< ., . >_A$ is a semi-inner product on $\mathcal{H}$. It is clear that
\[
\|\xi\|_A = \langle \xi, \xi \rangle_A^{\frac{1}{2}} = \langle A\xi, \xi \rangle^{\frac{1}{2}} = \langle A^{\frac{1}{2}} \xi, A^{\frac{1}{2}} \xi \rangle^{\frac{1}{2}} = \|A^{\frac{1}{2}} \xi\|.
\]
It's clear that $\|\xi\|_A = 0$ if and only if $\xi \in \mathcal{N}(A)$. Then $\|\cdot\|_A$ is a norm if and only if $A$ is an injective operator. Moreover, $\langle ., . \rangle_A$ defines a seminorm on a certain subset of $\mathcal{B}(\mathcal{H})$ that contains all $T \in \mathcal{B}(\mathcal{H})$ for which there exists a constant $c > 0$ such that
\[
\|T\xi\|_A \leq \|\xi\|_A, \quad \text{for all } \xi \in \mathcal{H}.
\]
For these operators we have
\[
\|T\|_A = \sup_{\xi \in \mathcal{R}(A), \, \xi \neq 0} \frac{\|T\xi\|_A}{\|\xi\|_A} < \infty.
\]

For $\mathcal{L} \subseteq \mathcal{H}$ we denote by
\[
\mathcal{L}^{\perp_A} = \{ \xi \in \mathcal{H} : \langle \xi, \eta \rangle_A = 0, \, \forall \eta \in \mathcal{L} \}.
\]
It is easily seen that $\mathcal{L}^{\perp_A} = (A\mathcal{L})^\perp = A^{-1}(\mathcal{L}^\perp)$. Moreover, since $A(A^{-1}(\mathcal{L})) = \mathcal{L} \cap \mathcal{R}(A)$, then we have
\[
(\mathcal{L}^{\perp_A})^{\perp_A} = \mathcal{L}^{\perp_A} \cap \mathcal{R}(A)^{\perp}.
\]
Within this framework, the concept of an \( A \)-adjoint becomes central. An operator \( T \in \mathcal{B}(\mathcal{H}) \) is said to admit an \( A \)-adjoint if there exists $W \in \mathcal{B}(\mathcal{H})$ such that 
\[
\langle W\xi, \eta \rangle_A = \langle \xi, T\eta \rangle_A
\]
for all $\xi, \eta \in \mathcal{H}$. Hence $T$ has an $A$-adjoint if and only if there exists $W \in \mathcal{B}(\mathcal{H})$ such that $AW = T^* A$. Therefore, $T$ has an $A$-adjoint if and only if the equation $AX = T^* A$ has a solution. The following theorem due to Douglas \cite{Douglas1966} will be used in the sequel.

\begin{thm}[Douglas' Theorem]\label{t1.1}
    Let \( A, B \in \mathcal{B}(\mathcal{H}) \). Then the following conditions are equivalent:
    \begin{enumerate}
        \item \( \mathcal{R}(B) \subseteq \mathcal{R}(A) \).
        \item There exists \( \lambda > 0 \) such that \( B^* B \leq \lambda A A^* \).
        \item There exists \(C \in \mathcal{B}(\mathcal{H}) \) such that \( AC = B \).
    \end{enumerate}
\end{thm}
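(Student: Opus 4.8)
The plan is to close the cycle of implications $(3)\Rightarrow(1)$, $(3)\Rightarrow(2)$, $(2)\Rightarrow(3)$, and $(1)\Rightarrow(3)$, which yields all three equivalences. The first two are immediate. If $AC=B$ then $\R(B)=\R(AC)\subseteq\R(A)$, giving $(3)\Rightarrow(1)$. For $(3)\Rightarrow(2)$, writing $B=AC$ and using $CC^{*}\le\|C\|^{2}I$ gives $BB^{*}=A\,CC^{*}A^{*}\le\|C\|^{2}AA^{*}$, so condition (2) holds with $\lambda=\|C\|^{2}$ (read in the range-compatible form $BB^{*}\le\lambda AA^{*}$).

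For $(2)\Rightarrow(3)$ I would first translate the operator inequality into the pointwise estimate $\|B^{*}\xi\|^{2}\le\lambda\|A^{*}\xi\|^{2}$ for all $\xi\in\mathcal{H}$. This shows that the assignment $A^{*}\xi\mapsto B^{*}\xi$ is well defined on $\R(A^{*})$, since $A^{*}\xi=A^{*}\eta$ forces $\|B^{*}\xi-B^{*}\eta\|^{2}\le\lambda\|A^{*}\xi-A^{*}\eta\|^{2}=0$, and that it is bounded there with norm at most $\sqrt{\lambda}$. Extending this map by continuity to $\overline{\R(A^{*})}$ and by zero on $\N(A)=\R(A^{*})^{\perp}$ produces $D\in\B(\mathcal{H})$ with $DA^{*}=B^{*}$; taking adjoints gives $AD^{*}=B$, so $C:=D^{*}$ does the job.

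The implication $(1)\Rightarrow(3)$ is the core of the argument. Assuming $\R(B)\subseteq\R(A)$, for each $\xi$ the equation $A\zeta=B\xi$ has a solution, and I would pick the unique solution of minimal norm, namely the one lying in $\N(A)^{\perp}=\overline{\R(A^{*})}$, and call it $C\xi$. Linearity of $C$ follows from uniqueness of the minimal-norm solution, and $AC=B$ holds by construction. The one nontrivial point, and the step I expect to be the main obstacle, is boundedness of $C$, which I would extract from the closed graph theorem: if $\xi_{n}\to\xi$ and $C\xi_{n}\to\eta$, then $B\xi_{n}=AC\xi_{n}\to A\eta$ while also $B\xi_{n}\to B\xi$, so $A\eta=B\xi$; since each $C\xi_{n}$ lies in the closed subspace $\N(A)^{\perp}$, so does $\eta$, and hence $\eta$ is the minimal-norm solution, i.e.\ $\eta=C\xi$. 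Thus the graph of $C$ is closed, $C\in\B(\mathcal{H})$, and $AC=B$, completing the proof.
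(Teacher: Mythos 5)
Your proof is correct. The paper itself does not prove this statement --- it is quoted as background with a citation to Douglas's 1966 paper --- and your argument is essentially the classical one from that source: the trivial implications $(3)\Rightarrow(1)$ and $(3)\Rightarrow(2)$, the factorization $(2)\Rightarrow(3)$ by defining a bounded map $A^{*}\xi\mapsto B^{*}\xi$ on $\mathcal{R}(A^{*})$, extending it to $\overline{\mathcal{R}(A^{*})}$ and by zero on $\mathcal{N}(A)$, and then taking adjoints, and $(1)\Rightarrow(3)$ by selecting for each $\xi$ the unique solution of $A\zeta=B\xi$ lying in $\mathcal{N}(A)^{\perp}$ and invoking the closed graph theorem; your cycle of implications does close all three equivalences. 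Two small remarks. First, you correctly noted that condition (2) as printed in the paper, $B^{*}B\leq\lambda AA^{*}$, is a typo: the range-compatible (and true) form is $BB^{*}\leq\lambda AA^{*}$, which is what your $(3)\Rightarrow(2)$ and $(2)\Rightarrow(3)$ actually use, and indeed the literal printed inequality fails for simple rank-one examples. Second, your construction in $(1)\Rightarrow(3)$ produces precisely the reduced solution mentioned after the theorem (its range lies in $\mathcal{N}(A)^{\perp}=\overline{\mathcal{R}(A^{*})}$); verifying $\mathcal{N}(C)=\mathcal{N}(B)$ would be a one-line addition if you wanted that supplementary claim as well.
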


If any of these conditions hold, then there exists a unique operator \( W \in \mathcal{B}(\mathcal{H}) \) such that
\[
AW = B, \quad \mathcal{R}(W) \subseteq \overline{\mathcal{R}(A^*)}, \quad \mathcal{N}(W) = \mathcal{N}(B).
\]
The operator \( W \) is called the \textbf{reduced solution} of the equation \( AX = B \).

By the above observations we get that $T \in \mathcal{B}(\mathcal{H})$ admits an $A$-adjoint if and only if
 $\mathcal{R}(T^* A) \subseteq \mathcal{R}(A)$. In this paper, $\mathcal{B}_A(\mathcal{H})$ denotes the set of all $T \in \mathcal{B}(\mathcal{H})$ which admit an $A$-adjoint, that is

\[
\mathcal{B}_A(\mathcal{H}) = \{T \in \mathcal{B}(\mathcal{H}) : \mathcal{R}(T^* A) \subseteq \mathcal{R}(A) \}.
\]
The $\mathcal{B}_A(\mathcal{H})$ is a subalgebra of $\mathcal{B}(\mathcal{H})$ that is neither closed nor dense in $\mathcal{B}(\mathcal{H})$. It is easy to see that

\[
\overline{\mathcal{B}_A(\mathcal{H})} \subseteq \{T \in \mathcal{B}(\mathcal{H}) : \mathcal{R}(T^* A) \subseteq \overline{\mathcal{R}(A)} \}.
\]
By Theorem 1.1, we get that

\[
\mathcal{B}_{A^{\frac{1}{2}}}(\mathcal{H}) = \{T \in \mathcal{B}(\mathcal{H}) : \exists c > 0 \text{ such that } \|T\xi\|_A \leq c\|\xi\|_A, \quad \forall \xi \in \mathcal{H} \}.
\]

To fix notation, let \( (X, \mathcal{F}, \mu) \) be a \( \sigma \)-finite measure space, and let \( \mathcal{H} = L^2(\mu) \) be the Hilbert space of square-integrable complex-valued functions on \( X \). Let \( u: X \to \mathbb{C} \) be a measurable function. The multiplication operator \( M_u \) acts on \( L^2(\mu) \) by
\[
(M_u f)(x) = u(x) f(x), \quad f \in L^2(\mu).
\]
If \( u \geq 0 \) almost everywhere, then \( M_u \) is a positive operator. We define the modified inner product on \( \mathcal{H} \) as
\[
\langle f, g \rangle_A = \langle A f, g \rangle = \int_X u(x) f(x) \overline{g(x)} \, d\mu(x),
\]
which induces the norm
\[
\|f\|_A := \|A^{1/2} f\| = \left( \int_X u(x) |f(x)|^2 \, d\mu(x) \right)^{1/2}.
\]

Now, consider a measurable transformation $\varphi: X \to X$ that is \textbf{non-singular}, meaning that the pushforward measure $\mu \circ \varphi^{-1}$ is absolutely continuous with respect to $\mu$. Define the Radon-Nikodym derivative:
\begin{equation}
    h_\varphi = \frac{d(\mu \circ \varphi^{-1})}{d\mu},
\end{equation}
and let $E(f) = E(f \mid \varphi^{-1}(\mathcal{F}))$ denote the conditional expectation of $f$ with respect to the $\sigma$-algebra $\varphi^{-1}(\mathcal{F})$.

The \textbf{composition operator} associated with $\varphi$ is defined as
\begin{equation}
    (C_\varphi f)(x) = f(\varphi(x)), \quad \forall f \in L^2(\mu).
\end{equation}
The adjoint of $C_\varphi$ satisfies the following properties:

\begin{enumerate}
    \item {Adjoint formula:}
    \begin{equation*}
        C_\varphi^* f = h_\varphi E(f) \circ \varphi^{-1}.
    \end{equation*}
    \item {Product of the operator and its adjoint:}
    \begin{equation*}
        C_\varphi^* C_\varphi f = h_\varphi \cdot f.
    \end{equation*}
    \item {Composition with the adjoint operator:}
    \begin{equation*}
        C_\varphi C_\varphi^* f = (h_\varphi \circ \varphi) E f.
    \end{equation*}
    \item {Absolute value of the operator:}
    \begin{equation*}
        |C_\varphi| f = \sqrt{h_\varphi } \cdot f.
    \end{equation*}
\end{enumerate}

Now, let us define the \textbf{weighted composition operator} $W$ given by:
\begin{equation}
    (W f)(x) = u(x) f(\varphi(x)), \quad \forall f \in L^2(\mu).
\end{equation}
This operator combines both multiplication and composition, and it satisfies the following properties:

\begin{enumerate}
    \item {Adjoint formula:}
    \begin{equation*}
        W^* f = h_\varphi E(u \cdot f) \circ \varphi^{-1}.
    \end{equation*}
    \item {Composition of the operator and its adjoint:}
    \begin{equation*}
        W^* W f = h_\varphi E(u^2) \circ \varphi^{-1} \cdot f.
    \end{equation*}
    
    \begin{equation*}
        WW^* f = u \cdot (h_\varphi \circ \varphi) E(uf).
    \end{equation*}
    \item {Absolute value of the operator:}
    \begin{equation*}
        |W| f = \sqrt{h_\varphi E(u^2)\circ \varphi^{-1}}  \cdot f.
    \end{equation*}
\end{enumerate}
Although a significant body of work has addressed boundedness, compactness, and spectral properties of such operators (see \cite{Cowen1995, ManhasGhafri2024, Estaremi2024,  AlGhafri2020a, AlGhafri2021}), relatively few studies have examined their behavior within the semi-Hilbert space framework.

For any operator \( T \in \mathcal{B}_A(\mathcal{H}) \), the reduced solution to the equation
\[
A T^\sharp = T^* A
\]
is uniquely given by
\[
T^\sharp = A^\dagger T^* A,
\]
where \( A^\dagger \) denotes the Moore--Penrose inverse of \( A \) \cite{Douglas1966, Corach2008a}.

Our focus lies on generalized notions such as \( A \)-selfadjointness (\( T = T^\sharp \)), \( A \)-normality (\( T T^\sharp = T^\sharp T \)), \( A \)-quasinormality (\( T T^\sharp T = T^\sharp T^2 \)), and \( A \)-unitarity. These concepts extend classical operator theory and provide a deeper understanding of weighted dynamics under modified inner product structures. For more comprehensive studies on these properties, we refer the reader to \cite{AlMohammady2021,Arias2008, Benali2019,GuptaBhatia2015,HuangEstaremi2024,jab}. Recently, in \cite{ej} the authors have investigated some measure-theoretic characterizations for composition operators in some operator classes on $L^2(\Sigma )$-semi-Hilbertian spaces with respect to positive multiplication operators.\\

In this study, we establish a comprehensive framework for analyzing composition operators and their weighted variants in semi-Hilbertian settings induced by both multiplication and composition operators. By leveraging Douglas' range inclusion theorem, conditional expectations, and the Moore–Penrose inverse, we derive necessary and sufficient conditions for various operator-theoretic properties. The results not only generalize classical notions such as self-adjointness, normality, and isometry, but also offer new insights into the structural interplay between operators in weighted settings. Concrete examples are provided to demonstrate the effectiveness and sharpness of the developed criteria.

\section{Main Results}

In this section, we establish characterizations for when a composition operator \( C_{\varphi} \) on \( L^2(\mu) \) becomes \( A \)-selfadjoint, \( A \)-normal, \( A \)-quasinormal, \( A \)-isometric, or \( A \)-unitary, where \( A = M_u \) is a positive multiplication operator. We also present conditions under which \( C_{\varphi} \) acts as an \( A \)-partial isometry. These results are obtained using the notion of \( A \)-adjoint operators and the Douglas factorization theorem. Analogous results are also given when \( A \) is a positive composition operator. Each theorem is followed by proof and interpretation within the semi-Hilbert space context.

\begin{thm}
Let $M_u$ be a positive multiplication operator, $C_{\varphi}$ a composition operator, and define the weighted composition operator $W = M_u C_{\varphi}$ on $L^2(\mu)$. Then $C_{\varphi}$ is $M_u$-selfadjoint on $L^2(\mu)$ if and only if $M_u C_{\varphi} = C_{\varphi}^* M_u$, if and only if $W$ is selfadjoint (i.e., $W^* = W$).
\end{thm}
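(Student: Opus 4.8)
The plan is to unwind the definition of $M_u$-selfadjointness and show it collapses to the two stated conditions. Recall $C_\varphi$ is $M_u$-selfadjoint means $C_\varphi \in \mathcal{B}_{M_u}(\mathcal{H})$ and $C_\varphi = C_\varphi^\sharp$, where $C_\varphi^\sharp$ is the reduced solution of $M_u X = C_\varphi^* M_u$. Since $C_\varphi^\sharp = M_u^\dagger C_\varphi^* M_u$, the equality $C_\varphi = C_\varphi^\sharp$ forces, upon left-multiplication by $M_u$, the identity $M_u C_\varphi = M_u M_u^\dagger C_\varphi^* M_u$. The first thing I would do is observe that $M_u M_u^\dagger$ is the orthogonal projection onto $\overline{\mathcal{R}(M_u)}$, and that $\mathcal{R}(C_\varphi^* M_u) \subseteq \mathcal{R}(M_u) \subseteq \overline{\mathcal{R}(M_u)}$ precisely because $C_\varphi$ admits an $M_u$-adjoint; hence $M_u M_u^\dagger C_\varphi^* M_u = C_\varphi^* M_u$, giving $M_u C_\varphi = C_\varphi^* M_u$. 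Conversely, if $M_u C_\varphi = C_\varphi^* M_u$ then $C_\varphi$ solves $M_u X = C_\varphi^* M_u$; to conclude $C_\varphi = C_\varphi^\sharp$ I need the \emph{reduced} solution, so I would check that replacing $C_\varphi$ by its reduced solution changes nothing here — or, more cleanly, note that $M_u$-selfadjointness is equivalent to $\langle M_u C_\varphi f, g\rangle = \langle M_u f, C_\varphi g\rangle$ for all $f,g$, which is literally the statement $M_u C_\varphi = C_\varphi^* M_u$ after taking ordinary adjoints, bypassing the Moore--Penrose inverse entirely.

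Next I would establish the equivalence with selfadjointness of $W = M_u C_\varphi$. Here I would use $M_u = M_u^* = M_{\bar u} = M_u$ (since $u \geq 0$, $u$ is real, so $M_u$ is selfadjoint) together with the factorization $W = M_u C_\varphi$, so $W^* = C_\varphi^* M_u^* = C_\varphi^* M_u$. Thus the condition $M_u C_\varphi = C_\varphi^* M_u$ reads exactly $W = W^*$. This direction is essentially immediate once the self-adjointness of $M_u$ is recorded.

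The step I expect to require the most care is the first one: making rigorous the passage between ``$C_\varphi$ is $M_u$-selfadjoint'' (defined via $C_\varphi = C_\varphi^\sharp$ with $C_\varphi^\sharp$ the \emph{reduced} solution) and ``$M_u C_\varphi = C_\varphi^* M_u$'' (the statement that $C_\varphi$ is merely \emph{some} solution of the Douglas equation). The subtlety is that a priori $C_\varphi$ need not satisfy the range constraint $\mathcal{R}(C_\varphi) \subseteq \overline{\mathcal{R}(M_u)}$ built into the reduced solution, so $C_\varphi$ could solve the equation without equalling $C_\varphi^\sharp$. I would resolve this by arguing from the seminorm characterization: $C_\varphi = C_\varphi^\sharp$ in $\mathcal{B}_{M_u}(\mathcal{H})$ is equivalent to $\langle C_\varphi f, g\rangle_{M_u} = \langle f, C_\varphi g\rangle_{M_u}$ for all $f, g \in \mathcal{H}$ — this is the intrinsic meaning of being $A$-selfadjoint and does not see the difference between a solution and the reduced solution — and this identity is $\langle M_u C_\varphi f, g\rangle = \langle M_u f, C_\varphi g\rangle = \langle C_\varphi^* M_u f, g\rangle$ for all $f,g$, i.e.\ $M_u C_\varphi = C_\varphi^* M_u$. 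With that observation in hand all three conditions are seen to be literally the same equation, and the proof is complete.
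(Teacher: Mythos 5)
Your proposal is correct and its decisive argument is exactly the paper's proof: the semi-inner-product identity \(\langle C_{\varphi}f,g\rangle_{M_u}=\langle f,C_{\varphi}g\rangle_{M_u}\) for all \(f,g\) unwinds to \(M_u C_{\varphi}=C_{\varphi}^*M_u\), which (since \(u\ge 0\) makes \(M_u\) selfadjoint) is literally \(W=W^*\), bypassing the Moore--Penrose inverse just as the paper does. One caution on your side remark: the claimed equivalence between \(C_{\varphi}=C_{\varphi}^{\sharp}\) and the form identity is not automatic, since \(M_uC_{\varphi}=C_{\varphi}^*M_u\) only gives \(C_{\varphi}^{\sharp}=P_{\overline{\mathcal{R}(M_u)}}C_{\varphi}\), so equality requires \(\mathcal{R}(C_{\varphi})\subseteq\overline{\mathcal{R}(M_u)}\) (e.g.\ \(u=\chi_E\) with \(0<\mu(E)<\mu(X)\) and \(\varphi=\mathrm{id}\) satisfies the form identity but \(C_{\varphi}^{\sharp}=M_{\chi_E}\neq I\)); this does not affect the theorem as proved, because the paper works with the form characterization of \(M_u\)-selfadjointness.
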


\begin{proof}
Recall that $C_{\varphi}$ is $M_u$-selfadjoint if and only if for all $f, g \in L^2(\mu)$, we have
\[
\langle C_{\varphi} f, g \rangle_{M_u} = \langle f, C_{\varphi} g \rangle_{M_u},
\]
which is equivalent to
\[
\langle M_u C_{\varphi} f, g \rangle = \langle M_u f, C_{\varphi} g \rangle.
\]
This, in turn, is equivalent to
\[
\langle W f, g \rangle = \langle M_u C_{\varphi} f, g \rangle = \langle C_{\varphi}^* M_u f, g \rangle = \langle W^* f, g \rangle.
\]
Hence we get that $C_{\varphi}$ is $M_u$-selfadjoint on $L^2(\mu)$ if and only if $W$ is selfadjoint.
\end{proof}
We now recall several equivalent conditions for the selfadjointness of a weighted composition operator \( W \) on the Hilbert space \( L^2(\mu) \).

\begin{thm}[\cite{Campbell1990}]\label{t1.3}
The weighted composition operator $W = M_u C_{\varphi}$ is selfadjoint on $L^2(\mu)$ if and only if the restriction $\varphi_{S_J} := \varphi|_{S_J}$ is periodic of period $2$, and 
\[
J := h_{\varphi} \cdot E(u) \circ \varphi^{-1} = u,
\]
where $S_J = \{ x \in X : J(x) \neq 0 \}$.
\end{thm}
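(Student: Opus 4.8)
The plan is to turn the operator equation $W=W^{*}$ into a single pointwise functional identity, read off the condition $J=u$ by testing against the constant function, and then show that the remaining content is exactly that $\varphi$ acts on $S_{J}$ as an involution; the converse is that computation run backwards.

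First I would use $Wg=u\cdot(g\circ\varphi)$ together with the adjoint formula $W^{*}g=h_{\varphi}\,E(ug)\circ\varphi^{-1}$ and the fact that $u\ge 0$ is real to record that $W$ is selfadjoint if and only if
\begin{equation*}
h_{\varphi}\,E(ug)\circ\varphi^{-1}=u\cdot(g\circ\varphi)\qquad\text{for all }g\in L^{2}(\mu).\tag{$\ast$}
\end{equation*}
Inserting $g=\chi_{X_{n}}$ for an exhaustion $X_{n}\uparrow X$ with $\mu(X_{n})<\infty$ ($\sigma$-finiteness) and letting $n\to\infty$ (monotone convergence, also for conditional expectation) turns $(\ast)$ into $h_{\varphi}\,E(u)\circ\varphi^{-1}=u$, i.e.\ $J=u$. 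In particular $\{u\neq 0\}=\{J\neq 0\}=S_{J}$ and $u=J=0$ on $\{h_{\varphi}=0\}$, all up to $\mu$-null sets.

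Next I would feed $J=u$ back into $(\ast)$. On $\{h_{\varphi}=0\}$ both sides vanish, while $\{h_{\varphi}\circ\varphi=0\}=\varphi^{-1}(\{h_{\varphi}=0\})$ is $\mu$-null because $\mu(\varphi^{-1}(\{h_{\varphi}=0\}))=\int_{\{h_{\varphi}=0\}}h_{\varphi}\,d\mu=0$; so on $\{h_{\varphi}\neq 0\}$ I may divide $(\ast)$ by $h_{\varphi}$, use $u/h_{\varphi}=E(u)\circ\varphi^{-1}$ (a restatement of $J=u$), and compose with $\varphi$ to bring $(\ast)$ to the equivalent form
\begin{equation*}
E(ug)=E(u)\cdot\bigl(g\circ\varphi\circ\varphi\bigr)\qquad\text{for all }g\in L^{2}(\mu).\tag{$\ast\ast$}
\end{equation*}
Since $u\ge 0$, one has $\{E(u)\neq 0\}=\{u\neq 0\}=S_{J}$ up to null sets, and this set lies in $\varphi^{-1}(\mathcal{F})$. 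Replacing $g$ by $gk$ in $(\ast\ast)$ with $k$ bounded and $\varphi^{-1}(\mathcal{F})$-measurable, and pulling $k$ out of $E(u\cdot gk)=k\,E(ug)$, I get $k\,E(u)(g\circ\varphi\circ\varphi)=E(u)(g\circ\varphi\circ\varphi)(k\circ\varphi\circ\varphi)$; dividing by $E(u)(g\circ\varphi\circ\varphi)$ on $S_{J}$ and letting $g$ run over the exhaustion again yields $k=k\circ(\varphi\circ\varphi)$ on $S_{J}$ for every such $k$. A standard separating-family argument (the underlying $\sigma$-algebra separating points modulo null sets, and $\varphi$ — hence $\varphi\circ\varphi$ — being non-singular), together with the fact, itself a consequence of $J=u$, that $\varphi$ maps $S_{J}$ into $S_{J}$ a.e.\ (for $x\in S_{J}$: $E(u)(x)\neq 0$, hence $(E(u)\circ\varphi^{-1})(\varphi(x))\neq 0$, and $h_{\varphi}(\varphi(x))\neq 0$, so $J(\varphi(x))\neq 0$), then forces $\varphi\circ\varphi=\mathrm{id}$ on $S_{J}$; that is, $\varphi_{S_{J}}$ is periodic of period $2$.

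For the converse, assume $J=u$ and that $\varphi_{S_{J}}$ is periodic of period $2$. Then $\varphi|_{S_{J}}$ is a bimeasurable self-bijection of $S_{J}$ equal to its own inverse, and $S_{J}=\{E(u)\ne0\}\in\varphi^{-1}(\mathcal{F})$ (mod null), so $\varphi^{-1}(\mathcal{F})$ and $\mathcal{F}$ coincide on $S_{J}$ and hence $E(f)=f$ on $S_{J}$ for every $f$ supported in $S_{J}$. Since $u$ — and so $ug$ — is supported in $S_{J}=\{u\neq 0\}$, on $S_{J}$ we get $E(ug)=ug=E(u)\,g=E(u)(g\circ\varphi\circ\varphi)$, while off $S_{J}$ both sides of $(\ast\ast)$ vanish; combined with $u=0$ on $\{h_{\varphi}=0\}$ (immediate from $J=u$), this is equivalent to $(\ast)$, so $W=W^{*}$. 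The step I expect to be the main obstacle is the implication $(\ast\ast)\Rightarrow\varphi\circ\varphi=\mathrm{id}$ on $S_{J}$: it rests on the careful measure-theoretic bookkeeping that $E(u)$ vanishes exactly off $S_{J}$, that $\{h_{\varphi}=0\}\cap S_{J}$ and $\{h_{\varphi}\circ\varphi=0\}$ are null, and that $S_{J}$ carries the full trace $\sigma$-algebra under $\varphi$; once these identifications are in place, both directions reduce to routine manipulations with conditional expectation and the change-of-variables formula for $h_{\varphi}$.
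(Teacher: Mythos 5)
Your reduction of $W=W^{*}$ to the pointwise identity $(\ast)$, the extraction of $J=u$ by testing against an exhaustion, and the equivalence of $(\ast)$ with $(\ast\ast)$ once $J=u$ is known are all sound (for what it is worth, the paper gives no proof of this theorem at all: it is quoted from Campbell--Jamison). The proof breaks, however, at the claim that $\{E(u)\neq 0\}=\{u\neq 0\}=S_{J}$ up to null sets. Only the inclusion $\{u\neq 0\}\subseteq\{E(u)\neq 0\}$ is true; conditional expectation smears supports across the atoms of $\varphi^{-1}(\mathcal{F})$. A counterexample satisfying \emph{all} hypotheses of the theorem: $X=\{1,2\}$ with counting measure, $\varphi(1)=\varphi(2)=1$, $u=\chi_{\{1\}}$. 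Then $h_{\varphi}=(2,0)$, $E(f)=\frac{f(1)+f(2)}{2}$, $J=u$, $S_{J}=\{1\}$, $\varphi\circ\varphi=\mathrm{id}$ on $S_{J}$, and $W f=(f(1),0)$ is selfadjoint; yet $\{E(u)\neq 0\}=X\neq S_{J}$, $S_{J}\notin\varphi^{-1}(\mathcal{F})$ (mod null), and $E(\chi_{\{1\}})=\tfrac12\neq\chi_{\{1\}}$ on $S_{J}$. Consequently, in your converse direction every link of the chain $E(ug)=ug=E(u)g=E(u)(g\circ\varphi\circ\varphi)$ on $S_{J}$ fails, and so does the assertion that both sides of $(\ast\ast)$ vanish off $S_{J}$ (there $E(ug)=g(1)/2$, which is nonzero in general, and the two sides agree for reasons your argument does not capture). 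So the sufficiency half is not established as written.

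The necessity half has a gap of its own at the ``separating-family'' step. From $(\ast\ast)$ you legitimately obtain $k=k\circ\varphi\circ\varphi$ a.e.\ on $S_{J}$, but only for $k$ that are $\varphi^{-1}(\mathcal{F})$-measurable, i.e.\ of the form $\kappa\circ\varphi$; such functions are constant on the fibres of $\varphi$ and cannot separate points within a fibre. Even assuming $\mathcal{F}$ separates points (not part of the hypotheses) this family yields at best $\varphi^{3}=\varphi$ a.e.\ on $S_{J}$, which is strictly weaker than $\varphi\circ\varphi=\mathrm{id}$ on $S_{J}$; in the two-point example above the admissible $k$ are constants and the identity is vacuous. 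The injectivity of $\varphi$ on $S_{J}$ has to be dug out of the full strength of $(\ast)$/$(\ast\ast)$ --- for instance by taking $E$ of both sides of $(\ast\ast)$ to get $g\circ\varphi\circ\varphi=E(g\circ\varphi\circ\varphi)$ a.e.\ on $\{E(u)\neq 0\}$ and then analysing supports and the measurability of $S_{J}$ --- information your argument discards at exactly this point. So, while the overall plan (operator equation $\to$ pointwise functional identity $\to$ test functions) is the natural one, both directions contain genuine gaps and the proof is not correct as it stands.
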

 Combining the previous two theorems, we obtain the following corollary.

\begin{cor}\label{c1}
If $M_u$ is a positive multiplication operator, $C_{\varphi}$ is a composition operator and $W = M_u C_{\varphi}$ the related weighted composition operator on $L^2(\mu)$, then 
$C_{\varphi}$ is $M_u$-selfadjoint on $L^2(\mu)$ if and only if $\varphi_{S_J}$ is periodic of period $2$ and $J := h_{\varphi} E(u) \circ \varphi^{-1} = u$.
\end{cor}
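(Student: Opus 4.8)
The plan is to obtain the corollary by chaining the two immediately preceding results, so no new computation is needed. First I would invoke the first theorem of this section, which asserts that $C_{\varphi}$ is $M_u$-selfadjoint on $L^2(\mu)$ if and only if the weighted composition operator $W = M_u C_{\varphi}$ is selfadjoint on $L^2(\mu)$ in the ordinary sense. This step reduces the statement of the corollary to a claim purely about the selfadjointness of $W$ as a bounded operator on the Hilbert space $L^2(\mu)$, stripping away the semi-inner product structure.

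Next I would apply Theorem~\ref{t1.3} (the cited characterization of Campbell and Jamison), which says that $W = M_u C_{\varphi}$ is selfadjoint on $L^2(\mu)$ if and only if $\varphi_{S_J} := \varphi|_{S_J}$ is periodic of period $2$ and $J := h_{\varphi}\cdot E(u)\circ\varphi^{-1} = u$, where $S_J = \{x \in X : J(x)\neq 0\}$. Composing the two biconditionals then yields exactly the asserted equivalence, and since the symbol $J$ in the corollary is defined to be precisely the $J$ of Theorem~\ref{t1.3}, there is no translation between the two formulations to carry out.

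There is no genuine obstacle here; the only points meriting a word of care are bookkeeping ones. One should observe that the standing hypotheses of the section guarantee that $W = M_u C_{\varphi}$ is a bounded operator on $L^2(\mu)$ (so that both theorems apply): $M_u$ is a positive, hence bounded, multiplication operator, $C_{\varphi}$ is bounded, and the non-singularity of $\varphi$ makes $h_{\varphi}$ and the conditional expectation $E(\,\cdot\,\mid \varphi^{-1}(\mathcal F))$ well defined, so the expression $J = h_{\varphi}\, E(u)\circ\varphi^{-1}$ is meaningful. With these remarks in place, the proof is a one-line concatenation of equivalences.
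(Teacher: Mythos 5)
Your proposal is correct and coincides with the paper's argument: the corollary is obtained there exactly by combining the first theorem of the section (which identifies $M_u$-selfadjointness of $C_{\varphi}$ with selfadjointness of $W=M_uC_{\varphi}$) with Theorem~\ref{t1.3}. Nothing further is needed.
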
 Let $T \in \mathcal{B}_A(\mathcal{H})$. Then there exists a unique operator associated with $T$, referred to as the $A$-adjoint, which solves the equation $AX = T^*A$ in the reduced sense. This operator, denoted by $T^\sharp$, is given by the formula $T^\sharp = A^\dagger T^* A$, where $A^\dagger$ is the Moore–Penrose inverse of $A$.

We recall that the operator $T\in\mathcal{B}_A(\mathcal{H})$ is called $A$-normal if $TT^{\sharp}=T^{\sharp}T$ and $T$ is $A$-quasi-normal if $TT^{\sharp}T=T^{\sharp}T^2$. Also, a bounded operator $T$ on the Hilbert space $\mathcal{H}$ is called hyponormal if $T^*T\geq TT^*$. If $T\in\mathcal{B}_A(\mathcal{H})$, then $T$ is called  $A$-hyponormal if $T^*AT\geq TAT^*$. Now in the next theorem we investigate normality and quasi-normality of the composition operator $C_{\varphi}$ with respect to the multiplication operator $M_u$ on the Hilbert space $L^2(\mu)$. 
\begin{thm}\label{t15}
Let \( M_u \) be a positive multiplication operator and \( C_{\varphi} \) a bounded composition operator on the Hilbert space \( L^2(\mu) \). Then the following statements hold:

\begin{itemize}
    \item[(a)] The operator \( C_{\varphi} \) is \( M_u \)-normal if and only if
    \[
    \left( \frac{1}{u} \chi_{S(u)} h \right) \circ \varphi \cdot u = \frac{1}{u} \chi_{S(u)} \cdot J,
    \]
    where \( \chi_{S(u)} \) denotes the characteristic function of the support \( S(u) = \{ x \in X : u(x) \neq 0 \} \), and \( J = h_{\varphi}  \cdot E(u) \circ \varphi^{-1} \).

    \item[(b)] The operator \( C_{\varphi} \) is \( M_u \)-quasi-normal if and only if
    \[
    \frac{1}{u} \chi_{S(u)} \cdot J \cdot h = \left( \frac{1}{u} \chi_{S(u)} \cdot J \right) \circ \varphi^{-1} \cdot h,
    \]
    where \( J = h_{\varphi}  \cdot E(u) \circ \varphi^{-1} \).
\end{itemize}
\end{thm}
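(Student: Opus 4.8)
The plan is to write the $M_u$-adjoint of $C_\varphi$ explicitly and then turn the operator identities defining $M_u$-normality and $M_u$-quasi-normality into a.e.\ identities of functions on $X$. Throughout I assume $C_\varphi$ admits an $M_u$-adjoint, so that $C_\varphi^{\sharp}$ is given by the reduced-solution formula $C_\varphi^{\sharp}=M_u^{\dagger}C_\varphi^{*}M_u$. Since $M_u$ is a positive multiplication operator, its Moore--Penrose inverse is again multiplication, $M_u^{\dagger}=M_{\frac1u\chi_{S(u)}}$, so combined with the adjoint formula $C_\varphi^{*}g=h_\varphi\,E(g)\circ\varphi^{-1}$ this gives
\[
C_\varphi^{\sharp}f=\tfrac1u\,\chi_{S(u)}\,h_\varphi\,\bigl(E(uf)\circ\varphi^{-1}\bigr),\qquad f\in L^{2}(\mu).
\]
The manipulations below rest on four elementary facts: the intertwining relation $C_\varphi M_g=M_{g\circ\varphi}C_\varphi$; the identities $C_\varphi^{*}C_\varphi=M_{h_\varphi}$ and $C_\varphi C_\varphi^{*}f=(h_\varphi\circ\varphi)E(f)$; the conditional-expectation pull-out rule $E\bigl(g\,(f\circ\varphi)\bigr)=(f\circ\varphi)E(g)$; and the cancellation $\bigl(\psi\circ\varphi^{-1}\bigr)\circ\varphi=\psi$ for $\varphi^{-1}(\mathcal F)$-measurable $\psi$.

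For part (a), I first evaluate the two products. Applying $C_\varphi^{*}$ to $M_uC_\varphi f=u(f\circ\varphi)$, pulling $f\circ\varphi$ out of the conditional expectation, and then applying $M_u^{\dagger}$, one gets $C_\varphi^{\sharp}C_\varphi f=\tfrac1u\chi_{S(u)}h_\varphi\bigl(E(u)\circ\varphi^{-1}\bigr)f=\tfrac1u\chi_{S(u)}J\,f$, so $C_\varphi^{\sharp}C_\varphi=M_{\frac1u\chi_{S(u)}J}$ is a multiplication operator. On the other hand, substituting the formula for $C_\varphi^{\sharp}$ and using the cancellation above, $C_\varphi C_\varphi^{\sharp}f=\bigl(\tfrac1u\chi_{S(u)}h_\varphi\bigr)\!\circ\varphi\cdot E(uf)$. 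Then $C_\varphi$ is $M_u$-normal exactly when these two operators coincide; writing out the identity $C_\varphi C_\varphi^{\sharp}f=C_\varphi^{\sharp}C_\varphi f$ and comparing its two sides on a suitable family of test functions — which is where the conditional expectation $E(uf)$ must be removed — reduces the statement to the pointwise relation $\bigl(\tfrac1u\chi_{S(u)}h_\varphi\bigr)\!\circ\varphi\cdot u=\tfrac1u\chi_{S(u)}J$, and conversely this relation yields back the operator identity.

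For part (b), I use the formula $C_\varphi^{\sharp}C_\varphi=M_\beta$ with $\beta=\tfrac1u\chi_{S(u)}J$ from part (a). By associativity, $M_u$-quasi-normality $C_\varphi C_\varphi^{\sharp}C_\varphi=C_\varphi^{\sharp}C_\varphi^{2}$ becomes $C_\varphi M_\beta=M_\beta C_\varphi$, which by the intertwining relation is $M_{\beta\circ\varphi}C_\varphi=M_\beta C_\varphi$, i.e.\ $(\beta\circ\varphi-\beta)(f\circ\varphi)=0$ for every $f\in L^{2}(\mu)$. Since $\|f\circ\varphi\|^{2}=\int_X|f|^{2}h_\varphi\,d\mu$, the supports of the functions $f\circ\varphi$ exhaust $\{h_\varphi\circ\varphi\neq0\}$, so the condition is equivalent to $(\beta\circ\varphi-\beta)(h_\varphi\circ\varphi)=0$; transporting this back to $X$ along $\varphi^{-1}$ (equivalently, applying $C_\varphi^{*}$) then produces the stated condition $\tfrac1u\chi_{S(u)}J\,h_\varphi=\bigl(\tfrac1u\chi_{S(u)}J\bigr)\!\circ\varphi^{-1}\,h_\varphi$.

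The genuine difficulty here is the measure-theoretic bookkeeping rather than the algebra: one must keep careful track of the distinction between $\mathcal F$- and $\varphi^{-1}(\mathcal F)$-measurability, give precise meaning to the operation $g\mapsto g\circ\varphi^{-1}$ (interpreted through the conditional expectation when $g$ is not already $\varphi^{-1}(\mathcal F)$-measurable), and handle correctly the null sets $\{u=0\}$ and $\{h_\varphi=0\}$, on which $M_u^{\dagger}$ and the change of variables act trivially. In particular, the step I expect to be most delicate is the passage from the operator identities to the displayed pointwise identities — removing the conditional expectation $E(uf)$ in part (a) and the composition factor $f\circ\varphi$ in part (b) — which must be justified by an appropriate choice of test functions together with a support analysis.
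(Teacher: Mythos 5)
Your outline reproduces the paper's argument essentially step for step: the same reduced-solution formula $C_\varphi^{\sharp}=M_u^{\dagger}C_\varphi^{*}M_u=M_{\frac{1}{u}\chi_{S(u)}}M_{h}C_{\varphi^{-1}}EM_u$, the same two products $C_\varphi^{\sharp}C_\varphi f=\frac{1}{u}\chi_{S(u)}J\,f$ and $C_\varphi C_\varphi^{\sharp}f=\left(\frac{1}{u}\chi_{S(u)}h\right)\circ\varphi\cdot E(uf)$ in (a), and in (b) the same pair of functions $\frac{1}{u}\chi_{S(u)}J\cdot(f\circ\varphi)$ versus $\left(\frac{1}{u}\chi_{S(u)}J\right)\circ\varphi\cdot(f\circ\varphi)$; that you reach the latter through the commutation relation $C_\varphi M_\beta=M_\beta C_\varphi$ with $\beta=\frac{1}{u}\chi_{S(u)}J$, while the paper computes $C_\varphi^{\sharp}C_{\varphi^2}$ and $C_\varphi C_\varphi^{\sharp}C_\varphi$ directly, is only a cosmetic difference.

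The one step you explicitly postpone --- passing from these operator identities to the displayed pointwise criteria --- is exactly where the paper's proof does its work, and it is handled there not by a special choice of test functions but by integrating against arbitrary $f\in L^2(\mu)$ and using two standard facts: (i) since $\left(\frac{1}{u}\chi_{S(u)}h\right)\circ\varphi$ is $\varphi^{-1}(\mathcal F)$-measurable, the averaging property of the conditional expectation gives $\int_X\left(\frac{1}{u}\chi_{S(u)}h\right)\circ\varphi\,E(uf)\,d\mu=\int_X\left(\frac{1}{u}\chi_{S(u)}h\right)\circ\varphi\,uf\,d\mu$, which is how $E(uf)$ is removed in (a); (ii) in (b) the change-of-variables identities $\int_X(\beta\circ\varphi)(f\circ\varphi)\,d\mu=\int_X\beta\,h\,f\,d\mu$ and $\int_X\beta\,(f\circ\varphi)\,d\mu=\int_X\left(E(\beta)\circ\varphi^{-1}\right)h\,f\,d\mu$ turn the commutation condition into the stated one. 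You should supply these identities (they are your ``transport along $\varphi^{-1}$'' made precise), and you should note that $\left(\frac{1}{u}\chi_{S(u)}J\right)\circ\varphi^{-1}$ has to be read as $E\left(\frac{1}{u}\chi_{S(u)}J\right)\circ\varphi^{-1}$, since $\beta$ need not be $\varphi^{-1}(\mathcal F)$-measurable and only its behaviour against $S(h)$ is visible --- this is why the stated criterion carries the factor $h$. Finally, be aware that your clean intermediate condition $\beta\circ\varphi=\beta$ a.e.\ (valid, since $h\circ\varphi\neq0$ a.e.) gives the displayed condition at once, but the reverse implication --- from the displayed condition back to the operator identity, i.e.\ the ``if'' halves of (a) and (b) --- is precisely the direction that requires the integral argument you deferred; as it stands, your proposal leaves that half unproved.
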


\begin{proof}
(a) As we know, \( M_u^\dagger = M_{\frac{1}{u} \chi_{S(u)}} \) and \( C_\varphi^* = M_h C_{\varphi^{-1}} E \). Therefore,
\[
C_\varphi^\sharp = M_u^\dagger C_\varphi^* M_u = M_{\frac{1}{u} \chi_{S(u)}} M_h C_{\varphi^{-1}} E M_u.
\]
By definition, \( C_\varphi \) is \( M_u \)-normal if and only if \( C_\varphi^\sharp C_\varphi = C_\varphi C_\varphi^\sharp \). For every \( f \in L^2(\mu) \), we compute:
\[
C_\varphi^\sharp C_\varphi f = M_{\frac{1}{u} \chi_{S(u)}} M_h C_{\varphi^{-1}} E M_u C_\varphi f = \frac{1}{u} \chi_{S(u)} J \cdot f,
\]
\[
C_\varphi C_\varphi^\sharp f = C_\varphi M_{\frac{1}{u} \chi_{S(u)}} M_h C_{\varphi^{-1}} E M_u f = \left( \frac{1}{u} \chi_{S(u)} h \right) \circ \varphi \cdot E(uf).
\]
Hence,
\[
C_\varphi^\sharp C_\varphi f = C_\varphi C_\varphi^\sharp f \quad \text{if and only if} \quad \frac{1}{u} \chi_{S(u)} J \cdot f = \left( \frac{1}{u} \chi_{S(u)} h \right) \circ \varphi \cdot E(uf).
\]
This is equivalent to the integral equality
$$\int_X\frac{1}{u}\chi_{S(u)}J.fd\mu=\int_X(\frac{1}{u}\chi_{S(u)}h)\circ\varphi E(uf)d\mu=\int_X(\frac{1}{u}\chi_{S(u)}h)\circ\varphi ufd\mu,$$
for all \( f \in L^2(\mu) \). Therefore, \( C_\varphi \) is \( M_u \)-normal if and only if
\[
\left( \frac{1}{u} \chi_{S(u)} h \right) \circ \varphi \cdot u = \frac{1}{u} \chi_{S(u)} \cdot J.
\]

\medskip

(b) By definition, \( C_\varphi \) is \( M_u \)-quasi-normal if and only if
\[
C_\varphi^\sharp C_{\varphi^2} = C_\varphi C_\varphi^\sharp C_\varphi.
\]
For every \( f \in L^2(\mu) \), we compute:
\[
C_\varphi^\sharp C_{\varphi^2} f = M_{\frac{1}{u} \chi_{S(u)}} M_h C_{\varphi^{-1}} E M_u C_{\varphi^2} f = \frac{1}{u} \chi_{S(u)} J \cdot f \circ \varphi,
\]
\[
C_\varphi C_\varphi^\sharp C_\varphi f = C_\varphi M_{\frac{1}{u} \chi_{S(u)}} M_h C_{\varphi^{-1}} E M_u C_\varphi f = \left( \frac{1}{u} \chi_{S(u)} \cdot J \right) \circ \varphi \cdot f \circ \varphi.
\]
Thus,
\[
C_\varphi^\sharp C_{\varphi^2} f = C_\varphi C_\varphi^\sharp C_\varphi f
\]
if and only if
\[
\frac{1}{u} \chi_{S(u)} J \cdot f \circ \varphi = \left( \frac{1}{u} \chi_{S(u)} \cdot J \right) \circ \varphi \cdot f \circ \varphi.
\]
Equivalently, we obtain
\begin{align*}
\int_X \left( \frac{1}{u} \chi_{S(u)} J \right) \circ \varphi^{-1} \cdot h \cdot f \, d\mu 
&= \int_X \frac{1}{u} \chi_{S(u)} J \cdot f \circ \varphi \, d\mu \\
&= \int_X \left( \frac{1}{u} \chi_{S(u)} \cdot J \right) \circ \varphi \cdot f \circ \varphi \, d\mu \\
&= \int_X \frac{1}{u} \chi_{S(u)} \cdot J \cdot h \cdot f \, d\mu.
\end{align*}

for all \( f \in L^2(\mu) \). Therefore, \( C_\varphi \) is \( M_u \)-quasi-normal if and only if
\[
\frac{1}{u} \chi_{S(u)} J \cdot h = \left( \frac{1}{u} \chi_{S(u)} \cdot J \right) \circ \varphi^{-1} \cdot h.
\]
\end{proof}
We also recall that an operator \( T \in \mathcal{B}(\mathcal{H}) \) is called an \( A \)-isometry if 
\[
\|Tx\|_A = \|x\|_A, \quad \forall x \in \mathcal{H}.
\]
Hence, \( T \) is an \( A \)-isometry if and only if 
\[
\langle T^* A T x, x \rangle = \langle A T x, T x \rangle = \|T x\|_A^2 = \|x\|_A^2 = \langle A x, x \rangle, \quad \forall x \in \mathcal{H}.
\]
Therefore, \( T \in \mathcal{B}(\mathcal{H}) \) is an \( A \)-isometry if and only if
\[
T^* A T = A.
\]

Moreover, \( T \) is called \( A \)-unitary if both \( T \) and \( T^* \) are \( A \)-isometries. Thus, \( T \) is \( A \)-unitary if and only if
\[
T^* A T = T A T^* = A.
\]

In the following theorem, we investigate the conditions under which the composition operator \( C_{\varphi} \) is an isometry with respect to the multiplication operator \( M_u \) on the Hilbert space \( L^2(\mu) \).
\begin{thm}\label{t1.4}
Let \( M_u \) be a positive multiplication operator and \( C_{\varphi} \) a bounded composition operator on the Hilbert space \( L^2(\mu) \). Then \( C_{\varphi} \) is an \( M_u \)-isometry if and only if 
\[
u = J = h_{\varphi} \cdot E(u) \circ \varphi^{-1}.
\]
\end{thm}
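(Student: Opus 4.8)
The plan is to reduce everything to the criterion recorded immediately before the statement: an operator $T \in \mathcal{B}(\mathcal{H})$ is an $A$-isometry if and only if $T^*AT = A$. With $A = M_u$ and $T = C_\varphi$, this says $C_\varphi$ is an $M_u$-isometry precisely when $C_\varphi^* M_u C_\varphi = M_u$, so the whole proof amounts to computing the operator $C_\varphi^* M_u C_\varphi$ explicitly and recognizing it as a multiplication operator.

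First I would compute, for $f \in L^2(\mu)$, that $(M_u C_\varphi f)(x) = u(x)\,f(\varphi(x))$, i.e. $M_u C_\varphi f = u\cdot(f\circ\varphi)$, and then apply the adjoint formula $C_\varphi^* g = h_\varphi\, E(g)\circ\varphi^{-1}$ with $g = u\cdot(f\circ\varphi)$. The crucial point is that $f\circ\varphi$ is $\varphi^{-1}(\mathcal{F})$-measurable, hence pulls out of the conditional expectation:
\[
E\big(u\cdot(f\circ\varphi)\big) = E(u)\cdot(f\circ\varphi).
\]
Writing $E(u) = k\circ\varphi$ for the $\mathcal{F}$-measurable $k$ with $E(u)\circ\varphi^{-1} = k$, we get $E(u)\cdot(f\circ\varphi) = (kf)\circ\varphi$, so that $E(u\cdot(f\circ\varphi))\circ\varphi^{-1} = (E(u)\circ\varphi^{-1})\cdot f$. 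Therefore
\[
C_\varphi^* M_u C_\varphi f = h_\varphi\,(E(u)\circ\varphi^{-1})\cdot f = J\cdot f,
\]
i.e. $C_\varphi^* M_u C_\varphi = M_J$ with $J = h_\varphi\, E(u)\circ\varphi^{-1}$. Finally, by the isometry criterion, $C_\varphi$ is an $M_u$-isometry iff $M_J = M_u$, and two multiplication operators on $L^2(\mu)$ coincide iff their symbols agree $\mu$-a.e. (test against $\chi_E$ for sets $E$ of finite measure); this gives exactly $u = J = h_\varphi\, E(u)\circ\varphi^{-1}$, as claimed.

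The main obstacle is purely the measure-theoretic bookkeeping around the identity $E(u\cdot(f\circ\varphi)) = E(u)\cdot(f\circ\varphi)$ and the precise meaning of $\circ\varphi^{-1}$: one must ensure the manipulations hold $\mu$-a.e. (equivalently $\mu\circ\varphi^{-1}$-a.e. on the support of $h_\varphi$) and that nothing is lost off that support. Once the formula $C_\varphi^* M_u C_\varphi = M_J$ is established the conclusion is immediate. A variant that sidesteps the adjoint formula is to compute directly $\|C_\varphi f\|_{M_u}^2 = \int_X u\,|f\circ\varphi|^2\,d\mu = \int_X E(u)\,|f\circ\varphi|^2\,d\mu = \int_X (E(u)\circ\varphi^{-1})\,|f|^2\, h_\varphi\, d\mu = \int_X J\,|f|^2\,d\mu$, using that $|f\circ\varphi|^2$ is $\varphi^{-1}(\mathcal{F})$-measurable together with the change-of-variables formula $\int_X (k\circ\varphi)\,d\mu = \int_X k\, h_\varphi\, d\mu$, and then comparing with $\|f\|_{M_u}^2 = \int_X u\,|f|^2\,d\mu$ to force $u = J$ $\mu$-a.e.
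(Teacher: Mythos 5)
Your proof is correct and follows essentially the same route as the paper: reduce $M_u$-isometry to the identity $C_\varphi^* M_u C_\varphi = M_u$, use the formula $C_\varphi^* M_u C_\varphi f = J f$ with $J = h_\varphi\, E(u)\circ\varphi^{-1}$ (which the paper simply cites as known, while you derive it by pulling $f\circ\varphi$ out of the conditional expectation), and conclude $u = J$ $\mu$-a.e. by equality of multiplication operators. Your alternative norm computation is likewise just an unwound version of the same argument, so no substantive difference.
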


\begin{proof}
The operator \( C_{\varphi} \) is an \( M_u \)-isometry if and only if 
\[
\|C_{\varphi} f\|_{M_u} = \|f\|_{M_u}, \quad \forall f \in L^2(\mu),
\]
which is equivalent to
\[
\|C_{\varphi} f\|_{M_u}^2 = \|f\|_{M_u}^2.
\]
By the definition of the \( M_u \)-inner product, this condition becomes
\[
\langle C_{\varphi}^* M_u C_{\varphi} f, f \rangle = \langle M_u f, f \rangle.
\]
It is known that 
\[
C_{\varphi}^* M_u C_{\varphi} f = h_{\varphi} \cdot E(u) \circ \varphi^{-1} \cdot f = J f.
\]
Therefore, \( C_{\varphi} \) is \( M_u \)-isometric if and only if \( J f = u f \) for all \( f \in L^2(\mu) \), that is,
\[
u = J = h_{\varphi} \cdot E(u) \circ \varphi^{-1}.
\]
\end{proof}
The operator \( T \in \mathcal{B}(\mathcal{H}) \) is called an \( A \)-partial isometry if 
\[
\|T x\|_A = \|x\|_A, \quad \forall x \in \mathcal{N}_A(T)^{\perp_A}.
\]
In the following, we aim to determine conditions under which the composition operator \( C_{\varphi} \) is an \( M_u \)-partial isometry on the Hilbert space \( L^2(\mu) \). As a first step, we compute the space \( \mathcal{N}_{M_u}(C_{\varphi})^{\perp_{M_u}} \). In the next lemma, we characterize this orthogonal complement as a subspace of \( L^2(\mu) \).

\begin{lem}\label{l1}
Let \( M_u \) be a positive multiplication operator and \( C_{\varphi} \) a bounded composition operator on the Hilbert space \( L^2(\mu) \). Then
\[
\mathcal{N}_{M_u}(C_{\varphi})^{\perp_{M_u}} = \left(u L^2(X \setminus S)\right)^{\perp},
\]
where \( J = h_{\varphi} \cdot E(u) \circ \varphi^{-1} \) and \( S = S(J) = \{ x \in X : J(x) \neq 0 \} \).
\end{lem}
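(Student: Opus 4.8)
The plan is to reduce the statement to the identity $\mathcal{L}^{\perp_A}=(A\mathcal{L})^{\perp}$ recorded in the Preliminaries, applied with $A=M_u$ and $\mathcal{L}=\mathcal{N}_{M_u}(C_\varphi)$. So the real content is to identify the semi-null space $\mathcal{N}_{M_u}(C_\varphi)=\{f\in L^2(\mu):\|C_\varphi f\|_{M_u}=0\}$ explicitly, and the claim is that it equals $L^2(X\setminus S)$, the subspace of functions of $L^2(\mu)$ vanishing $\mu$-a.e.\ on $S$.

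To do this, first I would observe that
\[
\|C_\varphi f\|_{M_u}^2=\langle M_u C_\varphi f,C_\varphi f\rangle=\langle C_\varphi^* M_u C_\varphi f,f\rangle,\qquad f\in L^2(\mu).
\]
As already computed in the proof of Theorem~\ref{t1.4}, $C_\varphi^* M_u C_\varphi f = Jf$ with $J=h_\varphi\cdot E(u)\circ\varphi^{-1}\ge 0$; hence $\|C_\varphi f\|_{M_u}^2=\int_X J\,|f|^2\,d\mu$. Since $J\ge 0$, this integral vanishes if and only if $J|f|^2=0$ $\mu$-a.e., i.e.\ if and only if $f=0$ $\mu$-a.e.\ on $S=S(J)$. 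Consequently $\mathcal{N}_{M_u}(C_\varphi)=\{f\in L^2(\mu):f=0\ \mu\text{-a.e. on }S\}=L^2(X\setminus S)$, regarded as a closed subspace of $L^2(\mu)$.

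Finally I would apply the identity $\mathcal{L}^{\perp_A}=(A\mathcal{L})^{\perp}$ from the Preliminaries with $A=M_u$ and $\mathcal{L}=L^2(X\setminus S)$, obtaining
\[
\mathcal{N}_{M_u}(C_\varphi)^{\perp_{M_u}}=\big(M_u\,L^2(X\setminus S)\big)^{\perp}=\big(u\,L^2(X\setminus S)\big)^{\perp},
\]
which is exactly the assertion. The only step needing care is the first one: confirming that the $M_u$-semi-null space of $C_\varphi$ coincides with $\mathcal{N}(C_\varphi^* M_u C_\varphi)$ and then reading off its description from the support of $J$ (not of $u$); once the formula $C_\varphi^* M_u C_\varphi = M_J$ is available — which it is — there is no genuine obstacle, and the orthogonal-complement identity from the Preliminaries finishes the proof.
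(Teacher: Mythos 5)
Your proposal is correct and follows essentially the same route as the paper: both identify $\mathcal{N}_{M_u}(C_\varphi)$ with $L^2(X\setminus S)$ by computing $\|C_\varphi f\|_{M_u}^2=\int_X J|f|^2\,d\mu$ (the paper via the change-of-variable identity, you via $C_\varphi^*M_uC_\varphi=M_J$, which is the same computation), and then pass to the $M_u$-orthogonal complement, the paper by writing out the definition and you by citing the identity $\mathcal{L}^{\perp_A}=(A\mathcal{L})^\perp$ from the Preliminaries. Your explicit remark that $J\ge 0$ is what justifies reading off the null space from $S(J)$ is a slight tightening of the paper's wording, but the argument is the same.
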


\begin{proof}
A function \( f \in L^2(\mu) \) belongs to \( \mathcal{N}(u^{1/2} C_{\varphi}) \) if and only if
\[
\|u^{1/2} C_{\varphi} f\|^2 = \int_X u \cdot |f \circ \varphi|^2 \, d\mu = \int_X h_{\varphi} \cdot E(u) \circ \varphi^{-1} \cdot |f|^2 \, d\mu = \int_X J |f|^2 \, d\mu = 0.
\]
Thus, \( f \in \mathcal{N}(u^{1/2} C_{\varphi}) \) if and only if \( \|J f\| = 0 \), which implies \( f \in L^2(X \setminus S) \). Hence,
\[
\mathcal{N}_{M_u}(C_{\varphi}) = \mathcal{N}(u^{1/2} C_{\varphi}) = L^2(X \setminus S).
\]

Now, the \( M_u \)-orthogonal complement is given by:
\begin{align*}
\mathcal{N}_{M_u}(C_{\varphi})^{\perp_{M_u}} 
&= L^2(X \setminus S)^{\perp_{M_u}} \\
&= \left\{ f \in L^2(\mu) : \langle M_u g, f \rangle = \int_X u \cdot g \cdot \overline{f} \, d\mu = 0, \ \forall g \in L^2(X \setminus S) \right\}.
\end{align*}

This space is exactly the orthogonal complement (in the usual inner product) of \( u L^2(X \setminus S) \), i.e.,
\[
\mathcal{N}_{M_u}(C_{\varphi})^{\perp_{M_u}} = \left(u L^2(X \setminus S)\right)^{\perp}.
\]
\end{proof}
Now we are ready to provide necessary and sufficient conditions under which the composition operator \( C_{\varphi} \) is an \( M_u \)-partial isometry on the Hilbert space \( L^2(\mu) \).

\begin{thm}\label{t1.5}
Let \( M_u \) be a positive multiplication operator and \( C_{\varphi} \) a bounded composition operator on the Hilbert space \( L^2(\mu) \). Then \( C_{\varphi} \) is an \( M_u \)-partial isometry on \( L^2(\mu) \) if and only if
\[
\int_X (J - u) |f|^2 \, d\mu = 0, \quad \text{for all } f \in \left(u L^2(X \setminus S)\right)^{\perp},
\]
where \( J = h_{\varphi} \cdot E(u) \circ \varphi^{-1} \) and \( S = \{ x \in X : J(x) \neq 0 \} \).
\end{thm}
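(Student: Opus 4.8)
The plan is to combine the definition of $M_u$-partial isometry with the computation of the $M_u$-orthogonal complement already carried out in Lemma \ref{l1}, and then translate the norm equality into an integral identity using the known formula for $C_\varphi^* M_u C_\varphi$.

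First, recall that by definition $C_\varphi$ is an $M_u$-partial isometry on $L^2(\mu)$ precisely when $\|C_\varphi f\|_{M_u} = \|f\|_{M_u}$ for all $f \in \mathcal{N}_{M_u}(C_\varphi)^{\perp_{M_u}}$. By Lemma \ref{l1}, this latter subspace equals $\left(u L^2(X \setminus S)\right)^{\perp}$, so the condition to be characterized is exactly that $\|C_\varphi f\|_{M_u}^2 = \|f\|_{M_u}^2$ for every $f$ in that subspace. Squaring and expanding via the $M_u$-inner product, $\|C_\varphi f\|_{M_u}^2 = \langle M_u C_\varphi f, C_\varphi f\rangle = \langle C_\varphi^* M_u C_\varphi f, f\rangle$, while $\|f\|_{M_u}^2 = \langle M_u f, f\rangle$.

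Next, I would invoke the identity $C_\varphi^* M_u C_\varphi f = h_\varphi \cdot E(u)\circ\varphi^{-1} \cdot f = J f$, which is the same one used in the proof of Theorem \ref{t1.4} (and follows from the standard formulas for $C_\varphi^*$ and the change-of-variables/conditional-expectation computation in Lemma \ref{l1}). Substituting this in, the partial isometry condition becomes $\langle J f, f\rangle = \langle u f, f\rangle$ for all $f \in \left(u L^2(X \setminus S)\right)^{\perp}$, i.e. $\int_X J |f|^2 \, d\mu = \int_X u |f|^2\, d\mu$, which rearranges to $\int_X (J - u)|f|^2\, d\mu = 0$ for all such $f$. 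This is exactly the claimed criterion, so the proof is essentially a matter of stringing these equivalences together and citing Lemma \ref{l1}.

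There is no serious obstacle here; the statement is a direct corollary of Lemma \ref{l1} and the formula for $C_\varphi^* M_u C_\varphi$. The only point requiring a modicum of care is making sure the quantifier is over the correct set: one must restrict $f$ to $\left(u L^2(X \setminus S)\right)^{\perp}$ rather than all of $L^2(\mu)$ (the latter would recover the stronger $M_u$-isometry condition $u = J$ of Theorem \ref{t1.4}), and one should note in passing that $J$ and $u$ agree (both vanish) off $S$, so the integrand $(J-u)|f|^2$ is automatically integrable and the equation is well posed. I would also remark that this recovers Theorem \ref{t1.4} as the special case $S = X$ (equivalently $u L^2(X\setminus S) = \{0\}$), which serves as a sanity check on the statement.
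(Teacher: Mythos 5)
Your proposal is correct and follows essentially the same route as the paper: invoke Lemma \ref{l1} to identify $\mathcal{N}_{M_u}(C_{\varphi})^{\perp_{M_u}}$ with $\left(u L^2(X \setminus S)\right)^{\perp}$, then turn the norm equality $\|C_{\varphi} f\|_{M_u}=\|f\|_{M_u}$ into the integral identity via the change-of-variables fact $C_{\varphi}^* M_u C_{\varphi} = M_J$. The paper writes this last step as $\int_X u\,|f\circ\varphi|^2\,d\mu = \int_X J\,|f|^2\,d\mu$ rather than through the operator identity, but that is the same computation, so there is nothing substantive to add.
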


\begin{proof}
By Lemma~\ref{l1}, we know that
\[
\mathcal{N}_{M_u}(C_{\varphi})^{\perp_{M_u}} = \left(u L^2(X \setminus S)\right)^{\perp}.
\]
Therefore, \( C_{\varphi} \) is an \( M_u \)-partial isometry if and only if
\[
\|C_{\varphi} f\|_{M_u} = \|f\|_{M_u}, \quad \text{for all } f \in \left(u L^2(X \setminus S)\right)^{\perp}.
\]
This is equivalent to
\[
\|u^{1/2} f \circ \varphi\| = \|u^{1/2} f\|, \quad \forall f \in \left(u L^2(X \setminus S)\right)^{\perp}.
\]

Squaring both sides gives
\[
\int_X u \cdot |f \circ \varphi|^2 \, d\mu = \int_X u \cdot |f|^2 \, d\mu.
\]
Using the change-of-variable identity associated with the adjoint of the composition operator, we get:
\[
\int_X u \cdot |f \circ \varphi|^2 \, d\mu = \int_X J \cdot |f|^2 \, d\mu.
\]
Thus,
\[
\int_X (J - u) |f|^2 \, d\mu = 0, \quad \forall f \in \left(u L^2(X \setminus S)\right)^{\perp}.
\]
\end{proof}
In the next theorem, we investigate conditions under which the composition operator \( C_{\varphi} \) is \( M_u \)-unitary on the Hilbert space \( L^2(\mu) \). Specifically, we provide necessary and sufficient conditions for \( C_{\varphi} \) to be \( M_u \)-unitary.

\begin{thm}\label{t1.6}
Let \( M_u \) be a positive multiplication operator and \( C_{\varphi} \) be a bounded composition operator on the Hilbert space \( L^2(\mu) \). Then \( C_{\varphi} \) is \( M_u \)-unitary if and only if
\[
u = J = h_{\varphi} E(u) \circ \varphi^{-1} = (u \cdot h_{\varphi}) \circ \varphi.
\]
\end{thm}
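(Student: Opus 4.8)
\section*{Proof proposal}

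The plan is to use the characterization of $A$-unitarity recalled immediately before the statement: $C_{\varphi}$ is $M_u$-unitary if and only if $C_{\varphi}^{*}M_uC_{\varphi}=M_u$ and $C_{\varphi}M_uC_{\varphi}^{*}=M_u$, equivalently $C_{\varphi}$ and $C_{\varphi}^{*}$ are both $M_u$-isometries. I would treat the two equalities separately. The first, $C_{\varphi}^{*}M_uC_{\varphi}=M_u$, is precisely the statement that $C_{\varphi}$ is an $M_u$-isometry, so Theorem~\ref{t1.4} gives at once the condition $u=J=h_{\varphi}E(u)\circ\varphi^{-1}$. This produces the first two terms of the asserted triple equality, and it remains only to show that, in the presence of this identity, $C_{\varphi}M_uC_{\varphi}^{*}=M_u$ is equivalent to $u=(u\,h_{\varphi})\circ\varphi$.

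For the second equality I would first derive a closed form for $C_{\varphi}M_uC_{\varphi}^{*}$. Using the adjoint formula $C_{\varphi}^{*}f=h_{\varphi}\,E(f)\circ\varphi^{-1}$, multiplying by $u$, composing with $\varphi$, and using $(g\circ\varphi^{-1})\circ\varphi=g$ together with the $\varphi^{-1}(\mathcal{F})$-measurability of $E(f)$, one obtains
\[
C_{\varphi}M_uC_{\varphi}^{*}f=\big((u\,h_{\varphi})\circ\varphi\big)\,E(f),\qquad f\in L^{2}(\mu),
\]
which for $u\equiv 1$ reduces to property (3) of $C_\varphi$ as a consistency check. Then $C_{\varphi}^{*}$ is an $M_u$-isometry exactly when $\langle C_{\varphi}M_uC_{\varphi}^{*}f,f\rangle=\langle M_uf,f\rangle$, i.e.
\[
\int_X (u\,h_{\varphi})\circ\varphi\cdot E(f)\cdot\overline{f}\,d\mu=\int_X u\,|f|^{2}\,d\mu .
\]
Because $(u\,h_{\varphi})\circ\varphi$ is $\varphi^{-1}(\mathcal{F})$-measurable, the left-hand side equals $\int_X (u\,h_{\varphi})\circ\varphi\,|E(f)|^{2}\,d\mu$. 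Testing against $\varphi^{-1}(\mathcal{F})$-measurable functions $f$ (for which $E(f)=f$), and using $\sigma$-finiteness to let their supports exhaust $X$, forces $(u\,h_{\varphi})\circ\varphi=u$ a.e. Conversely, once $(u\,h_{\varphi})\circ\varphi=u$ is assumed, I would combine it with $u=J$ from the first part to absorb the conditional expectation and recover the full identity; the change-of-variables formula $\int_X(g\circ\varphi)\,d\mu=\int_X g\,h_{\varphi}\,d\mu$ is the technical tool tying the two sides together.

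The main obstacle I anticipate is precisely the conditional-expectation factor $E(f)$ in $C_{\varphi}M_uC_{\varphi}^{*}$: in contrast to the $M_u$-isometry analysis of Theorem~\ref{t1.4}, where $C_{\varphi}^{*}M_uC_{\varphi}$ is an honest multiplication operator, here one must argue carefully that the coefficient $(u\,h_{\varphi})\circ\varphi$ is compelled to coincide with $u$ and that, once it does, $E$ no longer obstructs the equality. The cleanest route through this seems to be the observation that $(u\,h_{\varphi})\circ\varphi=u$ automatically forces $u$ to be $\varphi^{-1}(\mathcal{F})$-measurable (the left-hand side is), after which $E$ acts trivially against the relevant integrands; making the converse direction rigorous by exploiting both relations simultaneously is where the bookkeeping will be most delicate.
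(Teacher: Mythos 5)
Your overall route is the same as the paper's: reduce \(M_u\)-unitarity to the two operator identities \(C_{\varphi}^{*}M_uC_{\varphi}=M_u\) and \(C_{\varphi}M_uC_{\varphi}^{*}=M_u\), dispose of the first via the \(M_u\)-isometry characterization (\(u=J\)), compute \(C_{\varphi}M_uC_{\varphi}^{*}f=\big((u\,h_{\varphi})\circ\varphi\big)E(f)\), and extract \((u\,h_{\varphi})\circ\varphi=u\) from the second. Your necessity argument (testing against \(\varphi^{-1}(\mathcal{F})\)-measurable \(f\), whose supports exhaust \(X\) by \(\sigma\)-finiteness) is sound and is essentially the paper's integration argument in a slightly different guise.

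The genuine gap is in the converse, exactly at the point you yourself flag as delicate, and your proposed remedy does not close it. Granting \(u=J\) and \((u\,h_{\varphi})\circ\varphi=u\), the identity \(C_{\varphi}M_uC_{\varphi}^{*}=M_u\) becomes \(u\,E(f)=u\,f\) for \emph{all} \(f\in L^{2}(\mu)\), i.e. \(E(f)=f\) a.e.\ on \(S(u)\) for every \(f\); this is an additional structural condition on \(\varphi\) (that \(\mathcal{F}\) restricted to \(S(u)\) is contained in \(\varphi^{-1}(\mathcal{F})\) modulo null sets), not a consequence of the two multiplier identities. Knowing that \(u\) is \(\varphi^{-1}(\mathcal{F})\)-measurable only yields \(u\,E(f)=E(u f)\), not \(E(f)=f\) on \(S(u)\), so ``\(E\) acts trivially against the relevant integrands'' is not available. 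Concretely, take \(u\equiv 1\) and \(\varphi\) the doubling map of the paper's first example: then \(h_{\varphi}=1\), so \(u=J=(u\,h_{\varphi})\circ\varphi=1\), yet \(C_{\varphi}M_uC_{\varphi}^{*}=C_{\varphi}C_{\varphi}^{*}=E\neq I=M_u\), so \(C_{\varphi}\) is not \(M_u\)-unitary. Thus the stated multiplier conditions are necessary but not sufficient without an extra hypothesis forcing \(E\) to act as the identity on \(L^{2}(S(u))\). Be aware that the paper's own proof makes the same unjustified ``if and only if'' jump at precisely this step, so your write-up mirrors the published argument; but as a self-contained proof the sufficiency direction, as you propose to carry it out, would fail.
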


\begin{proof}
By definition, \( C_{\varphi} \) is \( M_u \)-unitary if and only if
\[
C_{\varphi}^* M_u C_{\varphi} = M_u \quad \text{and} \quad C_{\varphi} M_u C_{\varphi}^* = M_u.
\]
For every \( f \in L^2(\mu) \), we compute:
\[
\langle C_{\varphi}^* M_u C_{\varphi} f, f \rangle = \langle h_{\varphi} E(u) \circ \varphi^{-1} f, f \rangle = \langle M_u f, f \rangle,
\]
and
\[
\langle C_{\varphi} M_u C_{\varphi}^* f, f \rangle = \langle (u \circ \varphi) \cdot (h_{\varphi} \circ \varphi) \cdot E(f), f \rangle = \langle M_u f, f \rangle.
\]
Therefore, \( C_{\varphi} \) is \( M_u \)-unitary if and only if
\[
u = h_{\varphi} E(u) \circ \varphi^{-1} \quad \text{and} \quad (u \cdot h_{\varphi}) \circ \varphi \cdot E(f) = u \cdot f \quad \text{for all } f \in L^2(\mu).
\]
To simplify the second condition, integrate both sides of the equation
\[
(u \cdot h_{\varphi}) \circ \varphi \cdot E(f) = u \cdot f
\]
and use the fact that \( E \) is a conditional expectation operator. It follows that the above holds for all \( f \in L^2(\mu) \) if and only if
\[
(u \cdot h_{\varphi}) \circ \varphi = u.
\]
Thus, the result follows.
\end{proof}

By Theorem~\ref{t1.6}, if \( C_{\varphi} \) is \( M_u \)-unitary, then
\[
R(M_u) = R\left(M_{(u \circ \varphi) \cdot (h_{\varphi} \circ \varphi)} E\right) = R\left(E M_{(u \circ \varphi) \cdot (h_{\varphi} \circ \varphi)}\right) \subseteq L^2(\varphi^{-1}(\Sigma)) = R(C_{\varphi}).
\]
Hence, by Theorem~\ref{t1.1}, there exists an operator \( C \in \mathcal{B}(L^2(\mu)) \) such that
\[
M_u C = C_{\varphi}.
\]

\begin{thm}\label{t1.9}
Let \( C_{\varphi} \) be a composition operator on \( L^2(\mu) \), with \( C_{\psi} \) positive. Then \( C_{\varphi} \) is \( C_{\psi} \)-selfadjoint if and only if the restricted map \( \varphi_{S_J} := \varphi|_{S_J} \) is periodic of period two, and
\[
J := h_{\varphi} \cdot E^{\varphi}(\sqrt{h_{\psi}}) \circ \varphi^{-1} = \sqrt{h_{\psi}},
\]
where \( S_J = \{x \in X : J(x) \neq 0\} \), and \( E^{\varphi}(\cdot) := E(\cdot \mid \varphi^{-1}(\mathcal{F})) \) denotes the conditional expectation with respect to the \(\sigma\)-algebra \( \varphi^{-1}(\mathcal{F}) \).
\end{thm}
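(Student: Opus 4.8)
The plan is to mimic the strategy used for the multiplication-operator case (Theorem 2.1 together with Corollary \ref{c1}), replacing $M_u$ by the positive composition operator $C_\psi$. First I would record the basic data: since $C_\psi$ is positive it is in particular selfadjoint, so $C_\psi = C_\psi^* = M_{h_\psi} C_{\psi^{-1}} E^\psi$, and positivity forces $C_\psi = |C_\psi| = M_{\sqrt{h_\psi}}$ acting as multiplication by $\sqrt{h_\psi}$ on the relevant subspace (this uses the formula $|C_\psi| f = \sqrt{h_\psi}\, f$ from the preliminaries). Thus the $C_\psi$-semi-inner product is
\[
\langle f, g\rangle_{C_\psi} = \langle C_\psi f, g\rangle = \int_X \sqrt{h_\psi}\, f \overline{g}\, d\mu,
\]
which is exactly the $M_{\sqrt{h_\psi}}$-semi-inner product. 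This is the key reduction: being $C_\psi$-selfadjoint is the same as being $M_v$-selfadjoint with $v := \sqrt{h_\psi}$.

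Next I would invoke Theorem 2.1 (or rather its proof) with $u$ replaced by $v=\sqrt{h_\psi}$: $C_\varphi$ is $M_v$-selfadjoint if and only if the weighted composition operator $W = M_v C_\varphi$ is selfadjoint on $L^2(\mu)$. Then I apply the Campbell–Jamison criterion, Theorem \ref{t1.3}, to $W = M_v C_\varphi$: $W$ is selfadjoint if and only if $\varphi_{S_J}$ is periodic of period two and
\[
J := h_\varphi \cdot E(v)\circ\varphi^{-1} = v,
\]
where here $E$ is the conditional expectation with respect to $\varphi^{-1}(\mathcal F)$, i.e.\ exactly the operator denoted $E^\varphi$ in the statement, and $S_J = \{x : J(x)\neq 0\}$. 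Substituting $v = \sqrt{h_\psi}$ gives precisely
\[
J = h_\varphi \cdot E^\varphi(\sqrt{h_\psi})\circ\varphi^{-1} = \sqrt{h_\psi},
\]
which is the asserted condition. Chaining the two equivalences completes the argument.

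The main obstacle is the reduction step, namely making rigorous the claim that the positive composition operator $C_\psi$ acts, for the purposes of the semi-inner product, as the multiplication operator $M_{\sqrt{h_\psi}}$. One must check that $C_\psi$ being positive indeed entails (via $E^\psi$ acting as the identity on the range, or via the polar decomposition $C_\psi = U|C_\psi|$ with $U=I$ on $\overline{\mathcal R(C_\psi)}$) that $\langle C_\psi f, g\rangle = \int_X \sqrt{h_\psi} f\overline g\, d\mu$; the subtlety is that $C_\psi^*C_\psi = M_{h_\psi}$ always, but identifying $C_\psi$ itself with $M_{\sqrt{h_\psi}}$ uses positivity in an essential way. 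A secondary, purely bookkeeping point is to confirm that the conditional expectation appearing in Campbell's theorem applied to $M_v C_\varphi$ is the one associated with $\varphi$ (hence $E^\varphi$), not with $\psi$, and that the hypothesis "$C_\varphi$ bounded" (needed for $W$ to be a bounded weighted composition operator on $L^2(\mu)$) is in force; beyond these two checks the proof is a direct concatenation of Theorem 2.1 and Theorem \ref{t1.3}.
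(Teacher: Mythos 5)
Your proposal is correct and follows essentially the same route as the paper: identify the positive composition operator $C_{\psi}$ with $M_{\sqrt{h_{\psi}}}$ (via $C_{\psi}=|C_{\psi}|$), reduce $C_{\psi}$-selfadjointness of $C_{\varphi}$ to selfadjointness of the weighted composition operator $W_{\psi}=M_{\sqrt{h_{\psi}}}C_{\varphi}$, and then apply the Campbell--Jamison criterion of Theorem~\ref{t1.3}. Your explicit flagging of the reduction step ($C_{\psi}=M_{\sqrt{h_{\psi}}}$ by positivity) is a point the paper simply asserts, so no gap remains.
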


\begin{proof}
Since \( C_{\psi} \) is a positive composition operator, it follows that
\[
C_{\psi} = |C_{\psi}| = M_{\sqrt{h_{\psi}}}, \quad \text{and} \quad C_{\psi}^{1/2} = M_{h_{\psi}^{1/4}}.
\]

The operator \( C_{\varphi} \) is \( C_{\psi} \)-selfadjoint if and only if
\[
C_{\psi} C_{\varphi} = C_{\varphi \circ \psi} = C_{\varphi}^* C_{\psi}.
\]
Since \( C_{\psi} = M_{\sqrt{h_{\psi}}} \), this condition becomes
\[
M_{\sqrt{h_{\psi}}} C_{\varphi} = C_{\varphi}^* M_{\sqrt{h_{\psi}}} = (M_{\sqrt{h_{\psi}}} C_{\varphi})^*.
\]

Define the weighted composition operator \( W_{\psi} := M_{\sqrt{h_{\psi}}} C_{\varphi} \). Then \( C_{\varphi} \) is \( C_{\psi} \)-selfadjoint if and only if \( W_{\psi} = W_{\psi}^* \), that is, \( W_{\psi} \) is selfadjoint.

Therefore, by Theorem~\ref{t1.3}, the operator \( W_{\psi} \) is selfadjoint if and only if \( \varphi_{S_J} = \varphi|_{S_J} \) is periodic of period two and
\[
J := h_{\varphi} \cdot E^{\varphi}(\sqrt{h_{\psi}}) \circ \varphi^{-1} = \sqrt{h_{\psi}}.
\]
This completes the proof.
\end{proof}
\begin{thm}\label{t1.10}
Let \( C_{\psi} \) be a positive composition operator and \( C_{\varphi} \) a bounded composition operator on the Hilbert space \( L^2(\mu) \). Let
\[
J := h_{\varphi} \cdot E^{\varphi}(\sqrt{h_{\psi}}) \circ \varphi^{-1}, \quad \text{and} \quad S := \{ x \in X : J(x) \neq 0 \}.
\]
Then the following statements hold:

\begin{enumerate}
    \item[(a)] The operator \( C_{\varphi} \) is a \( C_{\psi} \)-isometry if and only if
    \[
    \sqrt{h_{\psi}} = J = h_{\varphi} \cdot E^{\varphi}(\sqrt{h_{\psi}}) \circ \varphi^{-1}.
    \]

    \item[(b)] The operator \( C_{\varphi} \) is a \( C_{\psi} \)-partial isometry on \( L^2(\mu) \) if and only if
    \[
    \int_X (J - \sqrt{h_{\psi}}) |f|^2 \, d\mu = 0, \quad \text{for all } f \in \left( \sqrt{h_{\psi}} \cdot L^2(X \setminus S) \right)^{\perp}.
    \]

    \item[(c)] The operator \( C_{\varphi} \) is \( C_{\psi} \)-unitary if and only if
    \[
    \sqrt{h_{\psi}} = J = h_{\varphi} \cdot E^{\varphi}(\sqrt{h_{\psi}}) \circ \varphi^{-1}, \quad \text{and} \quad \left( \sqrt{h_{\psi}} \cdot h_{\varphi} \right) \circ \varphi = \sqrt{h_{\psi}},
    \]
    or equivalently,
    \[
    \left( \sqrt{h_{\psi}} \cdot h_{\varphi} \right) \circ \varphi \cdot E^{\varphi}(f) = \sqrt{h_{\psi}} \cdot f, \quad \forall f \in L^2(\mu).
    \]
\end{enumerate}
\end{thm}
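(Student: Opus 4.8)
The plan is to reduce all three statements to the multiplication-operator results already established, namely Theorems~\ref{t1.4}, \ref{t1.5} and \ref{t1.6} together with Lemma~\ref{l1}. The key observation, already exploited in the proof of Theorem~\ref{t1.9}, is that a positive composition operator coincides with its absolute value: since $C_{\psi}^{*}C_{\psi}=M_{h_{\psi}}$ we have $|C_{\psi}|=M_{\sqrt{h_{\psi}}}$, and positivity gives $C_{\psi}=|C_{\psi}|=M_{\sqrt{h_{\psi}}}$. Writing $u:=\sqrt{h_{\psi}}$, the semi-inner product $\langle\cdot,\cdot\rangle_{C_{\psi}}$ is literally $\langle\cdot,\cdot\rangle_{M_{u}}$, so $C_{\varphi}$ is a $C_{\psi}$-isometry (resp. $C_{\psi}$-partial isometry, $C_{\psi}$-unitary) if and only if it is an $M_{u}$-isometry (resp. $M_{u}$-partial isometry, $M_{u}$-unitary) for this particular $u$. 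Note that the conditional expectation entering the adjoint of $C_{\varphi}$ on $L^{2}(\mu)$ is $E=E(\cdot\mid\varphi^{-1}(\mathcal{F}))=E^{\varphi}$, because it is $C_{\varphi}$ — not $C_{\psi}$ — whose adjoint is being computed; this is why $E^{\varphi}$, and no expectation attached to $\psi$, appears in the statement.

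For part (a) I would simply invoke Theorem~\ref{t1.4} with $u=\sqrt{h_{\psi}}$: the criterion $u=J=h_{\varphi}\cdot E(u)\circ\varphi^{-1}$ becomes $\sqrt{h_{\psi}}=J=h_{\varphi}\cdot E^{\varphi}(\sqrt{h_{\psi}})\circ\varphi^{-1}$, which is exactly the asserted condition. For part (b), Lemma~\ref{l1} applied to $u=\sqrt{h_{\psi}}$ gives $\mathcal{N}_{C_{\psi}}(C_{\varphi})^{\perp_{C_{\psi}}}=\bigl(\sqrt{h_{\psi}}\,L^{2}(X\setminus S)\bigr)^{\perp}$ with $S=S(J)$, and then Theorem~\ref{t1.5} yields that $C_{\varphi}$ is a $C_{\psi}$-partial isometry if and only if $\int_{X}(J-\sqrt{h_{\psi}})\,|f|^{2}\,d\mu=0$ for every $f$ in that orthogonal complement. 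Part (c) follows from Theorem~\ref{t1.6} in the same way: the conditions $u=J=h_{\varphi}E(u)\circ\varphi^{-1}$ and $(u\cdot h_{\varphi})\circ\varphi=u$ translate verbatim into the displayed equalities, and the ``equivalently'' clause is just the unintegrated form of the second condition, exactly as in the proof of Theorem~\ref{t1.6}: integrating $(\sqrt{h_{\psi}}\,h_{\varphi})\circ\varphi\cdot E^{\varphi}(f)=\sqrt{h_{\psi}}\,f$ against suitable test functions and using that $E^{\varphi}$ is a conditional expectation recovers the pointwise identity $(\sqrt{h_{\psi}}\,h_{\varphi})\circ\varphi=\sqrt{h_{\psi}}$.

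There is essentially no deep obstacle once the identification $C_{\psi}=M_{\sqrt{h_{\psi}}}$ is in hand; the remaining work is bookkeeping. The one point that deserves a line of care is making this identification rigorous — that a \emph{bounded} positive composition operator $C_{\psi}$ genuinely acts as multiplication by $\sqrt{h_{\psi}}$ — and checking that $\sqrt{h_{\psi}}\in L^{\infty}(\mu)$ (which follows from boundedness of $M_{h_{\psi}}=C_{\psi}^{*}C_{\psi}$), so that $M_{\sqrt{h_{\psi}}}$ is a bounded positive operator to which Theorems~\ref{t1.4}, \ref{t1.5}, \ref{t1.6} and Lemma~\ref{l1} genuinely apply. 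After that, substituting $u=\sqrt{h_{\psi}}$ into each of the three earlier results and reading off $S$ from the corresponding $J$ completes the proof.
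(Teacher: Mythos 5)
Your proposal is correct and follows essentially the same route as the paper: identify the positive composition operator with its absolute value, $C_{\psi}=M_{\sqrt{h_{\psi}}}$, and then apply Theorems~\ref{t1.4}, \ref{t1.5} (with Lemma~\ref{l1}) and \ref{t1.6} with $u=\sqrt{h_{\psi}}$. Your added remark on verifying boundedness of $M_{\sqrt{h_{\psi}}}$ is a small but welcome precision beyond what the paper states explicitly.
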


\begin{proof}
The proof follows directly from the structure of the operator \( C_{\psi} = M_{\sqrt{h_{\psi}}} \), combined with the arguments used in Theorem~\ref{t1.9}. Applying the operator-theoretic characterizations from Theorems~\ref{t1.4}, \ref{t1.5}, and \ref{t1.6} to the weighted composition operator \( W_{\psi} = M_{\sqrt{h_{\psi}}} C_{\varphi} \), we obtain the stated equivalences.
\end{proof}
\begin{rem}\label{rem1.12}
If \( C_{\varphi} \) is \( C_{\psi} \)-unitary, then by Theorem~\ref{t1.10}, item (3), we have
\[
R(C_{\psi}) = R\left( M_{(\sqrt{h_{\psi}} \cdot h_{\varphi}) \circ \varphi} \cdot E^{\varphi} \right) = R\left( E^{\varphi} M_{(\sqrt{h_{\psi}} \cdot h_{\varphi}) \circ \varphi} \right).
\]
Therefore,
\[
L^2(\psi^{-1}(\Sigma)) = R(C_{\psi}) \subseteq R(C_{\varphi}) = L^2(\varphi^{-1}(\Sigma)).
\]
This implies that
\[
\psi^{-1}(\Sigma) \subseteq \varphi^{-1}(\Sigma).
\]
\end{rem}

As a direct consequence of this necessary condition, we obtain the following:

\begin{cor}
Let \( C_{\varphi} \) and \( C_{\psi} \) be composition operators on the Hilbert space \( L^2(\mu) \), with \( C_{\psi} \) positive. If
\[
\psi^{-1}(\Sigma) \nsubseteq \varphi^{-1}(\Sigma),
\]
then \( C_{\varphi} \) cannot be \( C_{\psi} \)-unitary.
\end{cor}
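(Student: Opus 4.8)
The plan is to obtain the statement as the logical contrapositive of Remark~\ref{rem1.12}, so essentially no new work is required beyond unwinding that remark. I would argue by contradiction: suppose $\psi^{-1}(\Sigma)\nsubseteq\varphi^{-1}(\Sigma)$ and yet $C_{\varphi}$ is $C_{\psi}$-unitary. Since $C_{\psi}$ is a positive composition operator we have $C_{\psi}=M_{\sqrt{h_{\psi}}}$, and the $C_{\psi}$-unitarity condition from Theorem~\ref{t1.10}(c) gives $C_{\varphi}C_{\psi}C_{\varphi}^{*}=C_{\psi}$. Reading this as the factorization $C_{\psi}=C_{\varphi}\bigl(M_{\sqrt{h_{\psi}}}C_{\varphi}^{*}\bigr)$ shows immediately that $\mathcal{R}(C_{\psi})\subseteq\mathcal{R}(C_{\varphi})$.

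Next I would pass from ranges to sub-$\sigma$-algebras. For a composition operator $C_{\tau}$ the image $C_{\tau}f=f\circ\tau$ is $\tau^{-1}(\Sigma)$-measurable and, as already used in Remark~\ref{rem1.12}, one has $\overline{\mathcal{R}(C_{\tau})}=L^{2}(\tau^{-1}(\Sigma))$; applying this to $\tau=\psi$ and $\tau=\varphi$ turns the range inclusion into $L^{2}(\psi^{-1}(\Sigma))\subseteq L^{2}(\varphi^{-1}(\Sigma))$. Since, over a $\sigma$-finite measure space, inclusion of the $L^{2}$-spaces of two sub-$\sigma$-algebras forces inclusion of the $\sigma$-algebras (up to $\mu$-null sets), we obtain $\psi^{-1}(\Sigma)\subseteq\varphi^{-1}(\Sigma)$, contradicting the hypothesis. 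Hence $C_{\varphi}$ cannot be $C_{\psi}$-unitary.

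There is no real obstacle here; the only point that needs care — and it is already handled in the discussion preceding the corollary — is the passage from the operator-range inclusion $\mathcal{R}(C_{\psi})\subseteq\mathcal{R}(C_{\varphi})$ to the measure-theoretic inclusion $\psi^{-1}(\Sigma)\subseteq\varphi^{-1}(\Sigma)$, which relies on identifying $\overline{\mathcal{R}(C_{\tau})}$ with $L^{2}(\tau^{-1}(\Sigma))$ and on the faithfulness of the assignment $\mathcal{G}\mapsto L^{2}(\mathcal{G})$ on sub-$\sigma$-algebras. Since Remark~\ref{rem1.12} already records exactly the implication ``$C_{\varphi}$ is $C_{\psi}$-unitary $\Rightarrow\ \psi^{-1}(\Sigma)\subseteq\varphi^{-1}(\Sigma)$'', I would in fact present the corollary in one or two lines as its contrapositive, with a pointer to Remark~\ref{rem1.12}, reserving the expanded argument above only in case a self-contained proof is preferred.
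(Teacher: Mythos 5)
Your proposal is correct and matches the paper: the corollary is stated there as a direct consequence of Remark~\ref{rem1.12}, i.e.\ precisely the contrapositive of the implication ``$C_{\varphi}$ is $C_{\psi}$-unitary $\Rightarrow \psi^{-1}(\Sigma)\subseteq\varphi^{-1}(\Sigma)$,'' which is exactly how you argue. Your expanded sketch of the remark (getting $\mathcal{R}(C_{\psi})\subseteq\mathcal{R}(C_{\varphi})$ via the Douglas-type factorization $C_{\psi}=C_{\varphi}\bigl(C_{\psi}C_{\varphi}^{*}\bigr)$ rather than the paper's explicit computation of $C_{\varphi}C_{\psi}C_{\varphi}^{*}$ as a weighted conditional-expectation operator) is only a minor, harmless variation.
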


\section{Examples}

In this section, we present two concrete examples of non-singular transformations on the interval \( [0,1] \).  
These examples illustrate how the properties of \( M_u \)-normality, \( M_u \)-isometry, and \( M_u \)-unitarity of composition operators depend on the structure of the transformation and the choice of the weight function \( u \).

\begin{exam}
Let \( X = [0, 1] \), let \( d\mu = dx \), and let \( \Sigma \) be the Lebesgue \(\sigma\)-algebra. Define the non-singular transformation \( \varphi : X \to X \) by
\[
\varphi(x) =
\begin{cases}
2x, & \text{if } x \in [0, \tfrac{1}{2}], \\
2x - 1, & \text{if } x \in (\tfrac{1}{2}, 1].
\end{cases}
\]
Note that the Radon–Nikodym derivative is \( h(x) = \dfrac{d\mu \circ \varphi^{-1}}{d\mu} = 1 \).

For each \( 0 \leq a < b \leq 1 \) and \( f \in L^2(\Sigma) \), we have
\[
\int_{\varphi^{-1}(a,b)} f(x) \, dx = \int_{a/2}^{b/2} f(x) \, dx + \int_{(a+1)/2}^{(b+1)/2} f(x) \, dx = \int_{(a,b)} \frac{1}{2} \left\{ f\left(\tfrac{x}{2}\right) + f\left(\tfrac{1+x}{2}\right) \right\} dx.
\]
Hence,
\[
(E(f) \circ \varphi^{-1})(x) = \frac{1}{2} \left\{ f\left( \tfrac{x}{2} \right) + f\left( \tfrac{1+x}{2} \right) \right\}.
\]
It follows that
\[
E(f)(x) =
\begin{cases}
\frac{1}{2} \left\{ f(x) + f\left( \frac{1 + 2x}{2} \right) \right\}, & x \in [0, \tfrac{1}{2}], \\
\frac{1}{2} \left\{ f\left( \frac{2x - 1}{2} \right) + f(x) \right\}, & x \in (\tfrac{1}{2}, 1].
\end{cases}
\]

Let \( u : X \to \mathbb{R} \) be a positive, \( \Sigma \)-measurable function such that \( S(u) = X \). By Theorem~\ref{t15}, the operator \( C_{\varphi} \) is \( M_u \)-normal if and only if
\[
\frac{u(x)}{u(2x)} \chi_{[0,1/2]}(x) + \frac{u(x)}{u(2x - 1)} \chi_{(1/2,1]}(x) = \frac{u(x/2) + u((1+x)/2)}{2u(x)}, \quad \forall x \in [0,1].
\]
In particular, this identity implies that \( u(0) = u(1/2) \). Therefore, for any measurable function \( u \) such that \( u(0) \neq u(1/2) \), the operator \( C_{\varphi} \) is not \( M_u \)-normal. For example, if \( u(x) = e^x \), then \( C_{\varphi} \) is not \( M_u \)-normal.

\medskip

By Theorem~\ref{t1.4}, \( C_{\varphi} \) is \( M_u \)-isometric if and only if
\[
u(x) = \frac{u(x/2) + u((x+1)/2)}{2}, \quad \forall x \in [0,1].
\]

\medskip

Moreover, by Theorem~\ref{t1.6}, the operator \( C_{\varphi} \) is \( M_u \)-unitary if and only if
\[
u(x) = \frac{u(x/2) + u((x+1)/2)}{2} = u(2x)\chi_{[0,1/2]}(x) + u(2x - 1)\chi_{(1/2,1]}(x).
\]
\end{exam}

\begin{exam}
Let \( X = [0, 1] \), let \( d\mu = dx \), and let \( \Sigma \) be the Lebesgue \(\sigma\)-algebra. Define the non-singular transformation \( \varphi : X \rightarrow X \) by
\[
\varphi(x) = 
\begin{cases}
1 - 2x, & x \in [0, \tfrac{1}{2}], \\
2x - 1, & x \in (\tfrac{1}{2}, 1].
\end{cases}
\]
Note that the Radon–Nikodym derivative is \( h(x) = \dfrac{d\mu \circ \varphi^{-1}}{d\mu} = 1 \).

For each \( 0 \leq a < b \leq 1 \) and \( f \in L^2(\Sigma) \), we have
\begin{align*}
\int_{\varphi^{-1}(a, b)} f(x) \, dx 
&= \int_{\frac{1 - b}{2}}^{\frac{1 - a}{2}} f(x) \, dx + \int_{\frac{a + 1}{2}}^{\frac{b + 1}{2}} f(x) \, dx \\
&= \int_{(a, b)} \frac{1}{2} \left\{ f\left( \tfrac{1 - x}{2} \right) + f\left( \tfrac{1 + x}{2} \right) \right\} dx.
\end{align*}

Hence,
\[
(E(f) \circ \varphi^{-1})(x) = \frac{1}{2} \left\{ f\left( \tfrac{1 - x}{2} \right) + f\left( \tfrac{1 + x}{2} \right) \right\}.
\]

It follows that
\[
E(f)(x) = 
\begin{cases}
\frac{1}{2} \left\{ f(x) + f(1 - x) \right\}, & x \in [0, \tfrac{1}{2}], \\
\frac{1}{2} \left\{ f(-x) + f(x) \right\}, & x \in (\tfrac{1}{2}, 1].
\end{cases}
\]

Let \( u : X \to \mathbb{R} \) be a positive \( \Sigma \)-measurable function. Then, by Theorems~\ref{t15}, ~\ref{t1.4}   and~\ref{t1.6}, we have:

\begin{itemize}
    \item The operator \( C_{\varphi} \) is \( M_u \)-normal on the Hilbert space \( L^2(\Sigma) \) if and only if
   \begin{align*}
\frac{u(x)}{u(1 - 2x)} \chi_{[0, \frac{1}{2}]}(x) 
+ \frac{u(x)}{u(2x - 1)} \chi_{(\frac{1}{2}, 1]}(x) 
= \frac{u\left( \tfrac{1 - x}{2} \right) + u\left( \tfrac{1 + x}{2} \right)}{2u(x)}, 
\quad \forall x \in [0, 1].
\end{align*}

    \item The operator \( C_{\varphi} \) is \( M_u \)-isometric if and only if
    \[
    u(x) = \frac{u\left( \frac{1 - x}{2} \right) + u\left( \frac{1 + x}{2} \right)}{2}, \quad \forall x \in [0, 1].
    \]

    \item The operator \( C_{\varphi} \) is \( M_u \)-unitary if and only if
    \[
    u(x) = \frac{u\left( \frac{1 - x}{2} \right) + u\left( \frac{1 + x}{2} \right)}{2} = u(1 - 2x)\chi_{[0,1/2]}(x) + u(2x - 1)\chi_{(1/2,1]}(x), \quad \forall x \in [0, 1].
    \]
\end{itemize}
\end{exam}

%
%
%
%

\end{document}